\renewcommand{\epsilon}{\varepsilon}
\renewcommand{\setminus}{\smallsetminus}
\newtheorem{theorem}{Theorem}[section]
\newtheorem{proposition}[theorem]{Proposition}
\newtheorem{corollary}[theorem]{Corollary}
\newtheorem{lemma}[theorem]{Lemma}
\newtheorem{question}[theorem]{Question}
\theoremstyle{propositionA}
\newtheorem{theoremA}{Theorem}[section]
\theoremstyle{definition}
\newtheorem{notation}[theorem]{Notation}
\theoremstyle{remark}
\newtheorem{remark}[theorem]{Remark}
\newcommand{\Q}{\mathbb Q}
\newcommand{\Z}{\mathbb Z}
\newcommand{\R}{\mathbb R}
\newcommand{\C}{\mathbb C}
\newcommand{\OO}{\mathcal O}
\newcommand{\cohom}[3]{H^{{\raise1pt\hbox{$\scriptstyle#1$}}}(#2\>\!,#3)}
\newcommand{\tatecohom}[3]%
  {\widehat H^{{\raise1pt\hbox{$\scriptstyle#1$}}}(#2\>\!,#3)}
\newcommand{\Cohom}[3]%
  {H^{{\raise1pt\hbox{$\scriptstyle#1$}}}\big(#2\>\!,#3\big)}
\newcommand{\Tatecohom}[3]%
  {\widehat H^{{\raise1pt\hbox{$\scriptstyle#1$}}}\big(#2\>\!,#3\big)}
\newcommand{\homol}[3]{H_{{\lower1pt\hbox{$\scriptstyle#1$}}}(#2\>\!,#3)}
\newcommand{\homolog}[2]{H_{{\lower1pt\hbox{$\scriptstyle#1$}}}(#2)}
\title[]{Undistorted embeddings of metabelian groups of finite Pr\"ufer rank}
\author{Sean Cleary}
\author{Conchita Mart\'inez-P\'erez}
\date{today}
\thanks{The second author  was supported by  Gobierno de Arag\'on, European Regional 
Development Funds,
MTM2010-19938-C03-03 and Gobierno de Arag\'on, 
Subvenci\'on de Fomento de la Movilidad de los Investigadores.}
\begin{document}
\begin{abstract} General arguments of Baumslag and Bieri guarantee that any metabelian group of finite Pr\"ufer rank can be embedded in a metabelian constructible group.  Here, we consider the metric behavior of a rich class of examples and analyze the distortions of specific embeddings.
\end{abstract}

\maketitle

\section{Introduction}

Though the set of all metabelian groups is quite varied, there are some good restrictions on metabelian groups of particular types.    The class of constructible groups are those which can be built from the trivial group using finite extensions, finite amalgamated products and finite rank HNN extensions where all the attaching subgroups involved are themselves constructible.  Such constructible groups are naturally finitely presented and further of type $FP_\infty$. 
 Baumslag and Bieri \cite{BaumslagBieri} proved that any finitely generated metabelian group of finite Pr\"ufer rank (see Section \ref{lower}) can be embedded in a metabelian constructible group, showing the richness of possible subgroups of constructible groups. Moreover, since metabelian constructible groups  have finite Pr\"ufer rank, this hypothesis can not be dropped (however, there are  some embedding results for arbitrary metabelian groups in less restrictive groups, see for example \cite{BogleyHarlander} or \cite{DesiFlavia}).

Our motivation in this paper was to find conditions that guarantee that  the Baumslag-Bieri embedding is undistorted. 
There are some results in the literature about distortion in metabelian groups defined using wreath products (see for example \cite{Sean} or \cite{Davis}) but in our case, we are looking at a very different type of metabelian group.
A possible way to attack the problem is via an estimation of geodesic words lengths in finitely generated metabelian groups of finite Pr\"ufer rank.  If the group is split (that is,  of the form $G=Q\rtimes B$ with $Q$, $B$ being abelian), this reduces to the problem of estimating $G$-geodesic word lengths for elements in $B$; moreover one can assume that $B$ lies inside a finite dimensional rational vector space.

In the polycyclic case one has $B\cong\Z^n$, in fact finitely generated, so it has its own geodesic word lengths (which, upon fixing a generating system are quasi-equivalent  to  the usual 1-norm $\|(v_1,\ldots,v_n)\|=\sum_{i=1}^n|v_i|$). Then  one can estimate the geodesic word length in the whole group in terms of $\|\cdot\|$. This is what Arzhantseva and Osin do in Lemma 4.6 of \cite{ArzhantsevaOsin} where they obtain a bound for geodesic word lengths in some metabelian polycyclic groups which they use later to show that certain embedding is undistorted.

In order to be able to do something similar but in the most general case when $B$ is isomorphic to a subgroup of $\Q^n$ (note that we do not assume $G$ finitely presented) we introduce in Section \ref{preliminaries} what we call the $\mu$-norm in $\Q$ and its extension to $\Q^n$ which we denote $\mu_E$. If $G$ is torsion-free we may also represent the action on $B$ of any element of $G/G'$ using a rational matrix. The eigenvalues of this matrix are referred to as the eigenvalues of the element acting and in a similar way we talk about semi-simple actions when the associated matrices are semi-simple. Using the norm $\mu_E$ we get our first main result:

\begin{theoremA}\label{main1} Let $G$ be finitely-generated torsion-free metabelian of finite Pr\"ufer rank and let $B\leq \hat G\leq G$ be subgroups such that
\begin{itemize}
\item[i)] $B$ is abelian,

\item[ii)] either $B=G'$ or $G'\leq B$ and $G=Q\ltimes B$,

\item[iii)] $\hat G$ is finitely generated and acts semi-simply on $B$,

\item[iv)] there is some $g\in\hat G$ acting on $B$ with no eigenvalue of complex norm 1.
\end{itemize}
Then for any $b\in B$,
$$\|b\|_G\sim\ln(\mu_E(b)+1).$$
\end{theoremA}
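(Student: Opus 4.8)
The plan is to prove separately the two estimates packaged in the relation $\sim$. Write $n=\|b\|_G$. The \emph{lower bound} $\ln(\mu_E(b)+1)\preceq n$ says that $\mu_E$ grows at most exponentially with word length; it uses only the standing hypotheses. The \emph{upper bound} $n\preceq\ln(\mu_E(b)+1)$ says that elements of $B$ admit efficient spellings; this is where hypotheses (iii) and (iv) enter, and where the real work lies.

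For the lower bound, fix a finite generating set $X$ of $G$ and enlarge it so that $X$ contains a $\Z[Q]$-module generating set of $B$ (resp.\ in the case $B=G'$, a finite set of commutators generating $G'$ as a $\Z[G/G']$-module) together with generators of $Q$ in the split case. Given $b\in B$, write it as a word of length $n$ in $X$ and push all the $B$-syllables to one side using the metabelian structure; since $b\in B$ this produces an expression $b=\sum_{i=1}^{m}q_i\cdot v_i$ with $m$ polynomial in $n$, each $v_i$ in a fixed finite subset of $B$, and each $q_i$ represented by a word of length $\le n$ in $Q$ (resp.\ $G/G'$). Each $q_i$ acts on $B\otimes\Q\le\Q^n$ as a product of at most $n$ of the fixed rational matrices attached to the generators and their inverses, so, invoking the elementary properties of $\mu_E$ from Section~\ref{preliminaries} (a fixed rational matrix changes $\mu_E$ by at most a bounded factor, and $\mu_E$ is subadditive up to a constant), one gets $\mu_E(q_i\cdot v_i)\le C^{\,n}$ and hence $\mu_E(b)\le m\,C^{\,n}\le C_1^{\,n}$. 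Taking logarithms gives $\ln(\mu_E(b)+1)\le C_2(n+1)$, which is the desired inequality; the case $B=G'$ is entirely analogous.

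For the upper bound, let $g\in\hat G$ be as in (iv) and let $\phi$ denote conjugation by $g$ on $B$, extended $\Q$-linearly to $B\otimes\Q$. By (iii) $\phi$ is semisimple, and by (iv) it has no eigenvalue of complex modulus $1$, so over $\R$ there is a $\phi$-invariant splitting $B\otimes\R=V_+\oplus V_-$ on which $\phi$ expands, respectively contracts, with a uniform factor $\lambda>1$ in a suitable adapted norm. The target is a ``base-$g$'' normal form: a finite set $F\subseteq B$ such that every $b\in B$ can be written as $b=\prod_{j=-k}^{k}g^{\,j}\beta_j\,g^{-j}$ with all $\beta_j\in F$ and $k\le C\ln(\mu_E(b)+1)$. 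Granting this, telescoping the conjugates exhibits $b$ as a product of $O(k)$ generators, and since only finitely many elements of $B$ have $\mu_E$ below any fixed bound (again Section~\ref{preliminaries}), we conclude $\|b\|_G\le C'\ln(\mu_E(b)+1)$. The normal form is built by induction on $\mu_E(b)$: if $\mu_E(b)$ lies below a fixed threshold then $b\in F$; otherwise one removes from $b$ a bounded correction term and conjugates by $g$ or $g^{-1}$, arranging matters, via the expansion on $V_+$ (handled with $g^{-1}$) and the contraction on $V_-$ (handled with $g$), so that \emph{simultaneously} both components of the residue get smaller and the denominators stay controlled. Since $V_\pm$ need not be defined over $\Q$, one replaces the real components of $b$ by honest elements of $B$; the rounding errors so introduced lie in a fixed bounded set and are absorbed into $F$.

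The main obstacle is precisely this simultaneous reduction. A one-sided greedy step that shrinks the $V_+$-component of $b$ (by applying $\phi^{-1}$) enlarges its $V_-$-component, and conversely, so no one-directional base-$g$ expansion terminates; one must decrease both components at once, and it is exactly for this that $\mu_E$ — which simultaneously records the archimedean size and the finitely many relevant prime denominators of an element of $B\le\Q^n$ — is the right quantity rather than an ordinary vector norm, which is also why $\mu_E$ rather than a Euclidean norm appears in the statement. By comparison, the remaining points are routine once the $\mu_E$-calculus of Section~\ref{preliminaries} is available: the reduction of the case $B=G'$ to the same bookkeeping, the finiteness of $\{b\in B:\mu_E(b)\le R\}$, and the passage between the actions of $\hat G$ and of $G$ on $B$.
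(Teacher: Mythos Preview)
Your upper-bound scheme has a genuine gap. The base-$g$ normal form built from the archimedean splitting $V_+\oplus V_-$ with lattice rounding is precisely the content of Proposition~\ref{rankarbA}, and it works only for $a\in A=\varphi^{-1}(\Z^n)$; it does not extend to arbitrary $b\in B$. The claim that ``the rounding errors lie in a fixed bounded set and are absorbed into $F$'' fails because an archimedean-bounded element of $B$ can carry an arbitrarily large denominator and hence arbitrarily large $\mu_E$, so no finite $F$ contains all such errors. Concretely, take $G=\langle t\rangle\ltimes\Z[1/p]$ with $t$ acting by multiplication by $p>1$: here $V_-=0$, and for $b=1/p^k$ the archimedean size is $p^{-k}$ while $\mu_E(b)=p^k$. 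Your rule ``$V_+$ handled with $g^{-1}$'' sends $b$ to $1/p^{k+1}$, \emph{increasing} $\mu_E$; and subtracting any correction $c$ with $\mu_E(c)$ bounded cannot lower the denominator below $p^k$ once $k$ is large, so the induction never starts. More generally, when $\lambda=x/y$ with both $x,y$ nonunits, clearing the $x$-part and the $y$-part of the denominator requires conjugating in \emph{opposite} directions, and which direction is needed is a $p$-adic question unrelated to the archimedean expansion/contraction dichotomy you invoke.

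The paper's route is accordingly quite different. After reducing to cyclic $Q$ (Proposition~\ref{reduction}, which uses that $\Sigma_1^c(G)$ is finite) and then to $B=\mathcal O_L[\lambda^{\pm1}]$ with $\lambda=x/y$ and $x\mathcal O_L+y\mathcal O_L=\mathcal O_L$ (Subsection~\ref{reductions}), one writes $b=d/(x^sy^k)$ with $d\in\mathcal O_L$ and uses a \emph{norm-controlled B\'ezout identity} in $\mathcal O_L$ (Lemma~\ref{tech5}) to decompose $b=\lambda^kr_1+c_1+c_2+\lambda^{-s}r_2$ with $r_i,c_i\in\mathcal O_L$ of norm at most $\|d\|K^{s+k}$. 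Only at that point does the $V_+\oplus V_-$ argument enter, to bound each $\|r_i\|_G,\|c_i\|_G$ via Proposition~\ref{rankarbA}; and showing that $s+k+\ln\|d\|\preceq\ln(\mu_E(b)+1)$ is a further nontrivial computation with ideal factorizations in the Galois closure (Lemma~\ref{tech6}). Your lower-bound sketch is closer to the paper's collecting process, but be careful: $\mu_E$ is \emph{not} ``subadditive up to a constant'' (already $\mu(1/2)+\mu(1/3)=5$ versus $\mu(5/6)=30$, and the ratio is unbounded in general), and iterating Lemma~\ref{mu2} $n$ times compounds the constants. The paper avoids this by first embedding into a group where the generators act by \emph{integer} matrices, so that the collected expression has a single global denominator $n_q$ and one gets $\mu_E(b)\le |n_q|\cdot\|N_qc\|$ directly without any iteration. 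The case $B=G'$ is also not ``entirely analogous'': it requires an auxiliary embedding into a split group (Corollary~\ref{easyineq1}).
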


%Here we can explain that to prove this we embed in a nicer group for one of the inequalities and for the other we %consider a suitable subgroup and then show that this can be embedded as a finite index subgroup in a suitable %group. For this last type of group, we show it in a constructive way, i.e, we show how to obtain from an arbitrary %element $b\in B$, a 
%quasi-geodesic word representing $b$, i.e., a word whose length is quasi-equivalent to $\|b\|_G%$.

Using an old result in elementary number theory due to Kronecker we check that the hypothesis in Theorem \ref{main1} about the eigenvalues of the action holds true in the following special circumstances.

\begin{theoremA}\label{main1,5} Let $G$ be a  torsion-free  group with $G=\langle g\rangle\ltimes B$ with $B$ abelian. We assume that there is no finitely generated subgroup of $B$ setwise invariant under the action of $g$ and that $G$ is finitely presented. 
Then $g$ 
acts on $B$ with no eigenvalue of complex norm 1.\end{theoremA}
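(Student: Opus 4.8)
The plan is to translate both hypotheses into conditions on the eigenvalues of the matrix $T\in\GL_n(\Q)$ by which $g$ acts on $V:=B\otimes_{\Z}\Q$ (a finite‑dimensional space, since $B$ has finite Pr\"ufer rank), and then to collide them using the observation that an eigenvalue $\lambda$ of complex norm $1$ satisfies $\overline\lambda=\lambda^{-1}$, so that $\lambda^{-1}$ is a Galois conjugate of $\lambda$, hence again an eigenvalue of $T$. First a preliminary reduction: since $G$ is finitely generated we may take a generating set of the form $\{g,b_1,\dots,b_r\}$ with $b_i\in B$; as $B$ is normal abelian, $B$ is the normal closure of $\{b_1,\dots,b_r\}$, and since conjugation by $g$ is multiplication by $T$ this normal closure equals $\sum_i\Z[t,t^{-1}]b_i$. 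Thus $B$ is finitely generated as a module over $\Lambda:=\Z[t,t^{-1}]$ (with $t$ acting as $T$), and being torsion‑free it embeds in $V$.

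Next I would prove: $B$ contains a nonzero finitely generated $g$‑invariant subgroup if and only if $T$ has an eigenvalue that is an algebraic unit. If $C\leq B$ is such a subgroup, then $T|_{\Q C}$ preserves the lattice $C$, so the characteristic polynomials of $T|_{\Q C}$ and of its inverse lie in $\Z[x]$; hence its eigenvalues — which are eigenvalues of $T$ — are algebraic units. Conversely, if $\mu$ is a unit eigenvalue with minimal polynomial $q\in\Q[x]$, put $W:=\ker q(T)^n$; then $W$ is a rational $T$‑invariant subspace on which the characteristic polynomials of $T$ and $T^{-1}$ lie in $\Z[x]$, so $\Z[T,T^{-1}]$ acts on $W$ through a ring module‑finite over $\Z$, and applying it to a lattice in $W\cap B$ and rescaling into $B$ yields the required subgroup. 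Consequently the hypothesis on $B$ forces that $T$ has \emph{no} unit eigenvalue; in particular no root of unity is an eigenvalue, and $1$ is not an eigenvalue.

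Now I bring in finite presentability. Since $1$ is not an eigenvalue, $T-1$ is invertible on $V$, so $G'=(T-1)B$ has finite index in $B$ and $Q:=G/G'\cong\Z\times\bigl(B/(T-1)B\bigr)$ is virtually cyclic; its character sphere is therefore $S^0$, and the finite summand of $Q$ acts trivially on $G'$. By the Bieri--Strebel characterization of finitely presented metabelian groups, finite presentability of $G$ is then equivalent to $G'$ being finitely generated over $\Z[t]$ or over $\Z[t^{-1}]$, which — because $B$ and $G'$ differ by a finite quotient — is equivalent to $B$ being finitely generated over $\Z[T]$ or over $\Z[T^{-1}]$. The hypotheses are unchanged under replacing $g$ by $g^{-1}$ (hence $T$ by $T^{-1}$), so I may assume $B$ is finitely generated over the Noetherian ring $\Z[T]$.

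The crux is to deduce that $T^{-1}$ is integral, i.e.\ that $\mu^{-1}$ is an algebraic integer for every eigenvalue $\mu$ of $T$. Since $T^{-1}$ commutes with $T$, it is a $\Z[T]$‑module endomorphism of the finitely generated $\Z[T]$‑module $B$, so by the Cayley--Hamilton/determinant trick it satisfies a relation $T^{-m}+p_{m-1}(T)T^{-(m-1)}+\dots+p_0(T)=0$ with $p_i\in\Z[x]$; multiplying by $T^m$ shows $T$ is a root of $g(x):=p_0(x)x^m+\dots+p_{m-1}(x)x+1\in\Z[x]$, which has $g(0)=1$, so its reciprocal polynomial $\widetilde g(x)=x^{\deg g}g(1/x)$ is monic in $\Z[x]$ and has every $\mu^{-1}$ among its roots. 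Finally, suppose for contradiction that $T$ has an eigenvalue $\lambda$ with $|\lambda|=1$. Complex conjugation restricts to an automorphism of $\overline\Q$, and $|\lambda|=1$ gives $\overline\lambda=\lambda^{-1}$, so $\lambda^{-1}$ is a Galois conjugate of $\lambda$ and hence itself an eigenvalue of $T$; applying the integrality statement to both $\lambda$ and $\lambda^{-1}$ gives $\lambda^{-1}\in\overline\Z$ and $\lambda=(\lambda^{-1})^{-1}\in\overline\Z$, so $\lambda$ is an algebraic unit — contradicting the second paragraph. (Once $\lambda\in\overline\Z$ is known, Kronecker's theorem that an algebraic integer all of whose conjugates lie on the unit circle is a root of unity gives an alternative finish, contradicting instead the absence of root‑of‑unity eigenvalues.) I expect the real work, beyond these formal steps, to be the middle two paragraphs: checking that the Bieri--Strebel invariant of $G$ genuinely lives on $S^0$ and returns finite generation over $\Z[T]$ or $\Z[T^{-1}]$, and carrying out the integrality argument cleanly.
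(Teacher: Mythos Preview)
Your proof is correct in outline and takes a genuinely different route from the paper's. The paper argues: finitely presented abelian-by-cyclic implies constructible (citing Lennox--Robinson), hence $B$ has finite Pr\"ufer rank and, after swapping $g\leftrightarrow g^{-1}$, $g$ acts by an \emph{integer} matrix $M$; then Kronecker's palindrome lemma (if a monic irreducible $q\in\Z[x]$ has a root on the unit circle then $q(x)=x^{\deg q}q(1/x)$) is applied to the irreducible factor $q$ of the minimal polynomial of $M$ containing $\lambda$, so that $q(M)b=0$ forces $q(M^{-1})b=0$ and $\langle b,Mb,\dots,M^{\deg q-1}b\rangle$ is the forbidden $g$-invariant finitely generated subgroup. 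You instead isolate the two hypotheses as eigenvalue conditions --- ``no finitely generated invariant subgroup'' $\Leftrightarrow$ ``no algebraic-unit eigenvalue'', and ``finitely presented'' $\Rightarrow$ ``$\mu^{-1}\in\overline\Z$ for every eigenvalue $\mu$'' --- and collide them via $|\lambda|=1\Rightarrow\lambda^{-1}=\overline\lambda$ is a Galois conjugate, hence also an eigenvalue. Both arguments rest on the same underlying fact ($\overline\lambda=\lambda^{-1}$), but the paper packages it as Kronecker's lemma and builds the invariant subgroup directly, while you pass through the clean equivalence in your second paragraph; your route is more structural and avoids the constructibility black box, at the cost of invoking Bieri--Strebel and a Cayley--Hamilton step.

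There is one structural gap to repair. You assert at the outset that $V=B\otimes\Q$ is finite-dimensional ``since $B$ has finite Pr\"ufer rank'', but this is not a hypothesis: it must be deduced from finite presentability. As written, your argument is circular --- you use finite-dimensionality to show $1$ is not an eigenvalue, hence $G^{ab}$ is virtually cyclic, hence $S(G)=S^0$, and only then does Bieri--Strebel yield $B$ finitely generated over $\Z[T]$ (from which finite-dimensionality would follow). The fix is easy: apply the Bieri--Strebel theorem for metabelian groups directly to the extension $1\to B\to G\to\Z\to 1$ (their Theorem~A allows any abelian kernel, not just $G'$), so that $S(G/B)=S^0$ automatically and finite presentability immediately gives $B$ finitely generated over $\Z[T]$ or $\Z[T^{-1}]$; then $V$ is finite-dimensional (an ascending union of subspaces of the same dimension), and the rest of your argument runs. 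Alternatively, cite constructibility as the paper does. Either way, the detour through $G^{ab}$ is unnecessary.
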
 

Finally, using  the estimation of word lengths for elements in $B$,  we show that in the split case of the hypothesis of Theorem \ref{main1}  the Baumslag-Bieri embedding is undistorted.

\begin{theoremA}\label{main2} Let $G=Q\ltimes B$ be finitely generated of finite Pr\"ufer rank  with $Q$ and $B$ abelian. We assume that there is a subgroup $B\leq \hat G\leq G$ such that
\begin{itemize}
\item[i)] $\hat G$ is finitely generated and acts semi-simply on $B$,

\item[ii)] there is some $g\in\hat G$ acting on $B$ with no eigenvalue of complex norm 1.
\end{itemize}

Then $G$ can be embedded without distortion in a metabelian constructible group.
\end{theoremA}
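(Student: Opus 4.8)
The plan is to produce an explicit metabelian constructible overgroup $\tilde G\supseteq G$ --- essentially the one coming from the Baumslag--Bieri argument --- and to observe that both $G$ and $\tilde G$ fall under Theorem~\ref{main1}, so that the estimate $\|\cdot\|\sim\ln(\mu_E(\cdot)+1)$ holds on $B$ on \emph{both} sides and thereby pins down the distortion. First I would reduce to the case that $G$ is torsion-free: a finitely generated metabelian group of finite Pr\"ufer rank is virtually torsion-free (the torsion submodule of $B$ is a finitely generated $\Z[Q]$-module which is torsion and of finite Pr\"ufer rank as an abelian group, hence finite), and passing to a finite-index subgroup and then quotienting by a finite normal subgroup affects neither the existence of an undistorted embedding into a metabelian constructible group nor, up to $\sim$, any distortion function. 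So we may assume $G$ torsion-free, $B\leq\Q^n$, with $Q$ acting through a finitely generated subgroup of $\GL_n(\Q)$.

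Next, choose a finite set $S=\{p_1,\dots,p_k\}$ of primes such that the action of $Q$ extends to an action on $\Z_S^n$ (equivalently, the matrices of $Q$ lie in $\GL_n(\Z_S)$) and $B\leq\Z_S^n$. Put $\hat B=\Z_S^n$, let $\Z^k$ act on $\hat B$ by letting its $i$-th standard generator be multiplication by $p_i$, and set $\hat Q=Q\times\Z^k$ and $\tilde G=\hat Q\ltimes\hat B$. Then $\tilde G$ is metabelian, torsion-free, of finite Pr\"ufer rank, and finitely generated because $\hat B=\Z[\Z^k]\cdot\{e_1,\dots,e_n\}$ is a finitely generated $\Z[\hat Q]$-module; it is constructible by the argument of Baumslag and Bieri \cite{BaumslagBieri}, and it contains $G$. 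It therefore suffices to prove that the inclusion $G\hookrightarrow\tilde G$ is undistorted.

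Apply Theorem~\ref{main1} to $G$ with the hypothesized subgroup $\hat G$: this gives $\|b\|_G\sim\ln(\mu_E(b)+1)$ for $b\in B$. To apply it to $\tilde G$, take $\hat H=\langle\hat G,\Z^k,e_1,\dots,e_n\rangle\leq\tilde G$; it is finitely generated, it contains $\hat B$, and $\tilde G=\hat Q\ltimes\hat B$ with $\tilde G'\leq\hat B$, so the split case of the hypotheses of Theorem~\ref{main1} is in force. The subgroup $\hat H$ acts on $\hat B$ through the subgroup of $\hat Q$ generated by the image of $\hat G$ and by $\Z^k$; since semi-simplicity of the action depends only on the $\Q$-subalgebra of $n\times n$ rational matrices generated by the acting elements, it is inherited from $B$ by $\hat B\otimes_\Z\Q=\Q^n$, and adjoining the central scalars coming from $\Z^k$ keeps that subalgebra semisimple. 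Thus $\hat H$ acts semi-simply on $\hat B$, while $g\in\hat G\leq\hat H$ still has no eigenvalue of complex norm $1$. Hence Theorem~\ref{main1} yields $\|b\|_{\tilde G}\sim\ln(\mu_E(b)+1)$ --- the norm $\mu_E$ being computed in the same ambient $\Q^n$ in both settings --- and therefore $\|b\|_{\tilde G}\sim\|b\|_G$ for all $b\in B$.

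Finally, write $\gamma\in G$ as $\gamma=qb$ with $q\in Q$, $b\in B$. As $Q$ is a retract of $G$ and of $\tilde G$ via the semidirect-product projections, it is undistorted in both; and the projection $\tilde G\epi\hat Q$ gives $\|q\|_{\tilde G}\lesssim\|q\|_{\hat Q}\leq\|\gamma\|_{\tilde G}$, so $\|q\|_G\sim\|q\|_{\hat Q}\lesssim\|\gamma\|_{\tilde G}$ and $\|b\|_{\tilde G}\leq\|q\|_{\tilde G}+\|\gamma\|_{\tilde G}\lesssim\|\gamma\|_{\tilde G}$, whence $\|b\|_G\lesssim\|\gamma\|_{\tilde G}$ by the previous paragraph. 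Combining, $\|\gamma\|_G\leq\|q\|_G+\|b\|_G\lesssim\|\gamma\|_{\tilde G}\leq\|\gamma\|_G$, which is exactly undistortedness. The real work lies in the construction of $\tilde G$ and the transfer of the hypotheses of Theorem~\ref{main1}: one must localize $B$ just enough to make the top group constructible, yet keep a \emph{finitely generated} subgroup that simultaneously contains the enlarged module $\hat B$ (which forces $\Z^k$ into it, as $\hat B$ itself is not finitely generated) and still acts semi-simply with a non--unit-norm eigenvalue --- and this works precisely because the extra generators act as central scalars and so disturb neither semisimplicity nor the Kronecker-type eigenvalue condition. Everything else is routine word-metric bookkeeping together with the fact that retracts are undistorted.
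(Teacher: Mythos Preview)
Your argument is correct and follows the same strategy as the paper: build an explicit constructible metabelian overgroup by adjoining scalar automorphisms of $B$, apply Theorem~\ref{main1} on both sides to obtain $\|b\|_G\sim\ln(\mu_E(b)+1)\sim\|b\|_{\tilde G}$ for $b\in B$, then handle the $Q$-part via the semidirect-product structure. The paper uses a single ascending HNN extension $G_1=G\ast_\theta$ with $\theta|_B$ equal to multiplication by one integer $m$, whereas you localize at a finite set $S$ of primes and adjoin $|S|$ commuting scalars; both give constructible groups for the same reason. Your final retract/triangle-inequality bookkeeping is exactly the content of the paper's Proposition~\ref{semidirect}, and your explicit verification that Theorem~\ref{main1} applies to $\tilde G$ (via the subgroup $\hat H$, noting that the extra generators act by central scalars and hence preserve semisimplicity and the eigenvalue condition) is more careful than the paper's one-line appeal.

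One point to watch: your torsion-free reduction is asserted rather than argued. Passing to $G_0\leq G$ of finite index and then to $G_0/T(B)$ proves the theorem for that quotient, not for $G$; it is not automatic that an undistorted embedding of $G_0/T(B)$ into a constructible metabelian group yields one of $G$ itself. The paper's proof shares this imprecision --- it silently places $B$ inside $\Q^n$ and works with rational matrices --- so you are no worse off, but the clause ``affects neither the existence of an undistorted embedding\ldots'' would need justification to stand on its own.
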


The particular case when the group $G$ is a solvable Baumslag-Solitar group is part of  \cite[Theorem 1.4]{ArzhantsevaOsin}. This last result of Arzhantseva and Osin also shows that Dehn function of the group in which we are embedding is at most cubic and this was later improved to quadratic by De Cornulier and Tessera in \cite{CornulierTessera}. We have not done any attempt to look at the Dehn function of the group that one gets in the embedding but it seems plausible that the following question has an affirmative answer.

\begin{question}{\rm Let $G$ be a group satisfying the hypothesis of Theorem \ref{main2} (or some variation). Can $G$ be embedded without distortion in a metabelian constructible group with quadratic Dehn function?}
\end{question}

\section{Norms and quasi-equivalences}\label{preliminaries}

\begin{notation} Let $X$ be a set and let  $f,g:X\to\R^+$ be maps. If there are constants $C\geq 0$, $M\geq 1$, such that $f(x)\leq Mg(x)+C$ for any $x\in X$, then we put $f\preccurlyeq g$. If $f\preccurlyeq g$ and $g\preccurlyeq f$, then we denote $f\thicksim g$ and say that both functions are {\sl quasi-equivalent}. 

Let $G$ be a group generated by a finite set $\Omega$. Denote by $\parallel\cdot\parallel_G$ the geodesic word metric on $G$ with respect to $\Omega$. This is well-defined up to quasi-equivalence and since we also work up to quasi-equivalence we do not need to be very explicit about which particular generating system we are using. Let $H\leq G$ be a finitely generated subgroup and $\|\cdot\|_H$ its geodesic word metric. We say that $H$ is {\sl undistorted} in $G$ if $\|\cdot\|_G\thicksim\|\cdot\|_H$ where the first function is restricted to $H$. Note that since for any $h\in H$, $\|h\|_G\preceq\|h\|_H$, this is equivalent to $\|\cdot\|_H\preccurlyeq\|\cdot\|_G$.

As mentioned in the introduction, if $A$ is a finitely generated free abelian group then, upon fixing a basis, $A$ can be identified with $\Z^n$. Then $\|\cdot\|_A$ is quasi-equivalent to the restriction to $\Z^n$ of the usual 1-norm in $\R^n$ which we just denote $\|\cdot\|$
$$\|(a_1\ldots,a_n)\|=\sum_{i=1}^n|a_i|.$$

It will also be useful at some point to work with matrix norms, we define the in the usual way: For an $n\times n$-matrix $M$, let
$$\|M\|=\text{max}\{{\|Mv\|\over\|v\|}\mid 0\neq v\in\R^n\}.$$
%$$\|M\|_f=\text{max}\{{\|v^tM\|\over\|v\|}\mid 0\neq v\in\R^n\}.$$

To get an analogous norm but for $B\subseteq\Q^n$ we introduce what we call the $\mu$-norm:
Let $\gamma/\alpha\in\Q$. We set
$$\mu\Big({\gamma\over\alpha}\Big):={|\gamma\alpha|\over\text{gcd}(\gamma,\alpha)^2}.$$
We extend this  to $\Q^n$ in the obvious way:
 Let $\mathcal{B}=\{v_1,\ldots,v_n\}$ be a basis of $\Q^n$ and  $(\beta_1',\ldots,\beta_n')$ the coordinates of $b\in\Q^n$ in $\mathcal{B}$. We put
 $$\mu_E^{\mathcal{B}}(b)=\sum\mu(\beta_i').$$

\end{notation}

The following easy properties will be useful below:

\begin{lemma}\label{propln} Let $X$ be a set and $f,g,h:X\to\R^+\cup\{0\}$ be maps.

\begin{itemize}
\item[i)]  If $f\preccurlyeq  h,$ then $f+g\preccurlyeq  h+g$.

\item[ii)]\label{sum1} If $f\thicksim h,$ then $f+g\thicksim h+g.$

\item[iii)] For $a,b\in\R,$ $|a|,|b|\geq 1$, $\ln(|a|+|b|)\thicksim\ln|a|+\ln|b|.$

\item[iv)]\label{ln+1} For $c\in\R$, $|c|\geq 1$, $\ln|c|\sim\ln(|c|+1).$

\item[v)]\label{lnsum} For $a_1\ldots,a_n\in\Z,$
$$\ln(\sum_{i=1}^n|a_i|+1)\sim\sum_{i=1}^n\ln(|a_i|+1)$$
where the relevant constants depend on $n$.

\item[vi)] If $f\preccurlyeq  h$, then $\ln(f+1)\preccurlyeq\ln(h+1)$.

\end{itemize}
\end{lemma}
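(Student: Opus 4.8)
The plan is to dispatch the six items one at a time, each being an elementary consequence of the definitions of $\preccurlyeq$ and $\thicksim$ together with the identity $\ln(xy)=\ln x+\ln y$ and the monotonicity of $\ln$. Items (i), (ii) and (vi) are purely formal. For (i), fix $M\geq1$ and $C\geq0$ with $f(x)\leq Mh(x)+C$ for all $x$; adding $g(x)$ to both sides and using $g(x)\leq Mg(x)$ (valid since $M\geq1$ and $g\geq0$) gives $f(x)+g(x)\leq M(h(x)+g(x))+C$, so $f+g\preccurlyeq h+g$. Item (ii) is (i) applied in both directions. For (vi), from the same inequality $f(x)\leq Mh(x)+C$ one gets $f(x)+1\leq Mh(x)+C+1\leq(M+C+1)(h(x)+1)$ because $h(x)\geq0$, and taking logarithms (everything in sight is now $\geq1$, hence the logs are $\geq0$) yields $\ln(f(x)+1)\leq\ln(M+C+1)+\ln(h(x)+1)$, i.e.\ $\ln(f+1)\preccurlyeq\ln(h+1)$.

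For (iii), the bound $\ln(|a|+|b|)\preccurlyeq\ln|a|+\ln|b|$ follows from $|a|+|b|\leq2\max\{|a|,|b|\}\leq2|a||b|$, where the hypothesis $\min\{|a|,|b|\}\geq1$ is used in the last step; taking logs gives $\ln(|a|+|b|)\leq\ln2+\ln|a|+\ln|b|$. Conversely, $|a|,|b|\leq|a|+|b|$ forces $|a||b|\leq(|a|+|b|)^2$, hence $\ln|a|+\ln|b|=\ln(|a||b|)\leq2\ln(|a|+|b|)$; combining the two estimates gives (iii). For (iv), the hypothesis $|c|\geq1$ gives $\ln|c|\leq\ln(|c|+1)\leq\ln(2|c|)=\ln2+\ln|c|$, so $\ln|c|\thicksim\ln(|c|+1)$.

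Item (v) I would prove directly, which also makes the dependence of the constants on $n$ transparent. Expanding $\prod_{i=1}^n(|a_i|+1)$ produces the constant term $1$, the $n$ terms $|a_i|$, and further nonnegative terms, whence $\prod_{i=1}^n(|a_i|+1)\geq1+\sum_{i=1}^n|a_i|$; taking logs gives $\ln(\sum_{i=1}^n|a_i|+1)\leq\sum_{i=1}^n\ln(|a_i|+1)$, with multiplicative constant $1$. For the reverse inequality, each $|a_i|+1\leq\sum_{j=1}^n|a_j|+1$, so $\sum_{i=1}^n\ln(|a_i|+1)\leq n\ln(\sum_{j=1}^n|a_j|+1)$, which is where $n$ enters the constant. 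There is no genuine obstacle here; the only things to watch are keeping track of which side of each $\preccurlyeq$ picks up a multiplicative rather than an additive constant --- this is precisely what forces the $n$-dependence in (v) --- and making sure the standing hypotheses $|a|,|b|,|c|\geq1$, equivalently that all logarithms appearing are nonnegative, are invoked wherever needed, as in the step $|a|+|b|\leq2|a||b|$ above.
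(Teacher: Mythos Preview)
Your proof is correct and follows essentially the same elementary approach as the paper. The only notable differences are in (v) and (vi): the paper derives (v) by induction from (iii) and (iv), whereas you give a direct argument via the product expansion $\prod_i(|a_i|+1)\geq 1+\sum_i|a_i|$; and the paper proves (vi) by reducing (via (i) or (iv)) to the case $f,h\geq 1$ and then invoking (iii) to split $\ln(Mh+C)$, whereas your one-line bound $f+1\leq(M+C+1)(h+1)$ is cleaner and self-contained. Both routes are equally valid and equally elementary.
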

\begin{proof}

\begin{itemize}
\item[i)] Let $C\geq 0$, $M\geq 1$ with $f(x)\leq Mh(x)+C$ for any $x\in X$. Then
$$f(x)+g(x)\leq Mh(x)+Mg(x)+C.$$

\item[ii)] Follows from i).

\item[iii)] We have 
$${1\over 2}\ln|a|+{1\over 2}\ln|b|+\ln(2)\leq\ln(|a|+|b|)\leq\ln|a|+\ln|b|+\ln(2)$$

\item[iv)]  This is a particular case of iv).

\item[v)]  Use induction, iv) and v).

\item[vi)] By i) (or by iv)) it suffices to show that $\ln(f)\preccurlyeq\ln(h)$ if $f\preccurlyeq  h$ and $f(x),h(x)\geq 1$ for any $x\in\R$. Let $C\geq 0$, $M\geq 1$  with $f(x)\leq Mg(x)+C$ for any $x\in X$, note that we may assume $C\geq 1$. Taking $\ln$ and using iii)
$$\ln(f)\leq\ln(Mg+C)\thicksim\ln(Mg)+\ln (C)=\ln(g)+\ln M+\ln C.$$

\end{itemize}
\end{proof}

\iffalse
\begin{remark} Let $A\leq G\leq\text{GL}_n(\Z)$ such that $A$ is a cocompact lattice in some subgroup $A_0\leq\text{GL}_n(\R)$. 
Then \cite{Gromov} $3.A_2$ implies that 
$$\ln(\|a\|_A+1)\thicksim\ln(d_{A_0}(a,1))\preccurlyeq d_{\text{GL}_n(\Z)}(a,1)\thicksim\|a\|_{\text{GL}_n(\Z)}\preccurlyeq\|a\|_G$$
thus the distortion is at most exponential.
%The analogous for this inequality can be provn in more general situations using the collecting process
\end{remark}
\fi

\subsection{Properties of $\|\cdot\|$} The norm $\|\cdot\|$ in $\R^n$ is well known to be quasi-invariant up to change of basis. As a consequence, if we have a splitting $\R^n=U_1\oplus U_2\oplus\ldots\oplus U_s,$ then for any family of vectors $u_i\in U_i$, $i=1,\ldots,s$ 
\begin{equation}\label{sum}
\|u_1\|+\ldots+\|u_s\|\sim\|u_1+\ldots+u_s\|,
\end{equation} 
where the relevant constants depend on the particular subspaces $U_1,\ldots,U_s$.

On the other hand, it follows from Lemma \ref{lnsum} that for $a\in\Z^n$,
$$\ln(\|a\|+1)\sim\sum_{i=1}^n\ln(|a_i|+1).$$

\subsection{Properties of $\mu_E$}
In general, the function $\mu$ behaves badly with respect to sums, and this has the unpleasant consequence that it is not quasi-invariant upon change of basis. However, things get better with logarithms, thanks to the following property:

\begin{lemma}\label{mu2} For $\beta_1,\beta_2\in\Q$ arbitrary,
 $$\ln\Big(\mu(\beta_1+\beta_2)+1\Big)\preceq\ln\Big(\mu(\beta_1)+1\Big)+\ln\Big(\mu(\beta_2)+1\Big).$$ 

\end{lemma}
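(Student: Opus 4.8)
The plan is to reduce everything to the elementary observation that $\mu$ is genuinely a function on $\Q$ with a simple closed form. Since the defining expression $|\gamma\alpha|/\gcd(\gamma,\alpha)^2$ is unchanged when $\gamma$ and $\alpha$ are both multiplied by a common integer, $\mu(\gamma/\alpha)$ depends only on the rational number $\gamma/\alpha$; writing that number in lowest terms as $a/b$ with $\gcd(a,b)=1$ gives $\mu(a/b)=|ab|$. In particular $\mu(q)\geq 1$ for every $q\neq 0$, and passing to a non-reduced representative of a fraction can only increase the numerator--times--denominator relative to $\gcd^2$, so cancelling a common factor only decreases $\mu$.

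First I would dispose of the degenerate cases: if $\beta_1=0$, or $\beta_2=0$, or $\beta_1+\beta_2=0$, then one of the three quantities involved is $0$ and, since $\ln(0+1)=0$, the inequality is immediate. So assume $\beta_1,\beta_2$ and $\beta_1+\beta_2$ are all nonzero and write $\beta_i=a_i/b_i$ in lowest terms, so that $|a_i|,|b_i|\geq 1$ and $\mu(\beta_i)=|a_ib_i|$.

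The main computation is then a deliberately crude estimate. From $\beta_1+\beta_2=(a_1b_2+a_2b_1)/(b_1b_2)$ and the remark about cancellation,
$$\mu(\beta_1+\beta_2)\;\leq\;|a_1b_2+a_2b_1|\,|b_1b_2|\;\leq\;\bigl(|a_1||b_2|+|a_2||b_1|\bigr)|b_1||b_2|.$$
Expanding the right-hand side and bounding $|b_i|^2\leq(|a_i||b_i|)^2=\mu(\beta_i)^2$ --- which is legitimate precisely because the reduced-fraction normalization forces $|a_i|\geq 1$ --- one gets at most $\mu(\beta_1)\mu(\beta_2)\bigl(\mu(\beta_1)+\mu(\beta_2)\bigr)\leq 2\mu(\beta_1)^2\mu(\beta_2)^2$, the last inequality using $\mu(\beta_i)\geq 1$. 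Hence $\mu(\beta_1+\beta_2)+1\leq 3\mu(\beta_1)^2\mu(\beta_2)^2$.

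Taking logarithms (using monotonicity of $\ln$ and Lemma~\ref{propln} to absorb the additive constant $\ln 3$ and the multiplicative $2$'s) then yields $\ln(\mu(\beta_1+\beta_2)+1)\preccurlyeq\ln(\mu(\beta_1)+1)+\ln(\mu(\beta_2)+1)$, as claimed. I do not expect any real obstacle here: the only point that needs care is keeping the normalization straight so that the wasteful step $|b_i|\leq\mu(\beta_i)$ is valid, and this is exactly the phenomenon being exploited --- $\mu$ is badly behaved under sums, but after a logarithm these polynomial-size losses become additive constants and disappear into $\preccurlyeq$.
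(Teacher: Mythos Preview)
Your proof is correct and follows essentially the same approach as the paper: write each $\beta_i$ in lowest terms, bound $\mu(\beta_1+\beta_2)$ crudely by the product of numerator and denominator of the obvious common-denominator expression, and observe that the resulting bound is polynomial in $\mu(\beta_1),\mu(\beta_2)$, which becomes a $\preccurlyeq$ after taking logarithms. The only cosmetic differences are that the paper first extracts $d=\gcd(m_1,m_2)$ from the denominators (which gains nothing) and applies the log--of--sum splitting from Lemma~\ref{propln}(iii) after taking logarithms, whereas you use the triangle inequality before taking logarithms; your route is in fact slightly more direct and yields a marginally better constant.
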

\begin{proof} We may assume that $0\neq\beta_1,\beta_2,\beta_1+\beta_2$ thus since all the numbers inside $\ln$ are integers by Lemma \ref{lnsum} it suffices to check that
$$\ln\Big(\mu(\beta_1+\beta_2)\Big)\preceq\ln\Big(\mu(\beta_1)\Big)+\ln\Big(\mu(\beta_2)\Big).$$
We put $\beta_i=k_i/m_i$ with $k_i$ and $m_i$ coprime integers for both $i=1,2$.
Let $d$ be the greatest common divisor of $m_1$ and $m_2$ and we put $m_i=m_i'd$ again for $i=1,2$. Then 
%using Lemma \ref{lnsum}
$$\begin{aligned}\ln\Big(\mu(\beta_1+\beta_2)\Big)=\ln(\mu\Big({k_1m_2'+k_2m_1'\over m_1'm_2'd}\Big))\leq\ln|(k_1m_2'+k_2m_1')m_1'm_2'd|\\
=\ln|k_1m_1(m_2')^2+k_2m_2(m_1')^2|\\
\preceq\ln|k_1m_1|+2\ln|m_2'|+\ln|k_2m_2|+2\ln|m_1'|\\
\leq 3\ln\Big(\mu(\beta_1)\Big)+3\ln\Big(\mu(\beta_2)\Big).
\end{aligned}$$
where we use  Lemma \ref{ln+1} to split the sum inside the logarithm.
\end{proof}

\begin{lemma}\label{inv} With the notation above,
$$\ln(\mu_E^{\mathcal{B}}(b)+1)\sim\ln(\mu_E(b)+1).$$
\end{lemma}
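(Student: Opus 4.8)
The plan is to establish the symmetric statement that $\ln(\mu_E^{\mathcal{B}}(b)+1)\thicksim\ln(\mu_E^{\mathcal{C}}(b)+1)$ for \emph{any} two bases $\mathcal{B},\mathcal{C}$ of $\Q^n$; the lemma is then the special case in which $\mathcal{C}$ is the reference basis used to define $\mu_E$. By symmetry it is enough to prove the single inequality $\ln(\mu_E^{\mathcal{C}}(b)+1)\preccurlyeq\ln(\mu_E^{\mathcal{B}}(b)+1)$, the reverse one being obtained by interchanging $\mathcal{B}$ and $\mathcal{C}$. First I would fix the change-of-coordinates data: if $b$ has coordinates $(\beta_1,\dots,\beta_n)$ in $\mathcal{B}$ and $(\gamma_1,\dots,\gamma_n)$ in $\mathcal{C}$, then $\gamma_j=\sum_{i=1}^n a_{ji}\beta_i$ for a fixed matrix $(a_{ij})\in\GL_n(\Q)$ depending only on $\mathcal{B}$ and $\mathcal{C}$.

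The argument rests on two elementary facts about $\mu$, which I would record first. \emph{Scaling.} For a fixed rational $a$ (write $a=p/q$ in lowest terms) and any $\beta\in\Q$ (write $\beta=k/m$ in lowest terms), the fraction $a\beta=pk/(qm)$ is an integer representative, so $\mu(a\beta)\le|pk\cdot qm|=|pq|\,|km|=|pq|\,\mu(\beta)$, using that $\mu(\gamma/\alpha)\le|\gamma\alpha|$ for every integer representative and that $\mu(\beta)=|km|$. Since $\mu$ takes non-negative integer values, this yields $\ln(\mu(a\beta)+1)\preccurlyeq\ln(\mu(\beta)+1)$ with additive constant $\ln|pq|$ depending only on $a$ (the cases $a=0$ or $\beta=0$ being trivial). \emph{Logarithmic subadditivity.} Iterating Lemma~\ref{mu2} $(n-1)$ times gives $\ln\!\big(\mu(c_1+\dots+c_n)+1\big)\preccurlyeq\sum_{i=1}^n\ln(\mu(c_i)+1)$ for arbitrary $c_1,\dots,c_n\in\Q$, with constants depending only on $n$.

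The rest is bookkeeping. For each $j$, apply logarithmic subadditivity to $\gamma_j=\sum_i a_{ji}\beta_i$ and then the scaling bound to each summand:
$$\ln(\mu(\gamma_j)+1)\ \preccurlyeq\ \sum_{i=1}^n\ln(\mu(a_{ji}\beta_i)+1)\ \preccurlyeq\ \sum_{i=1}^n\ln(\mu(\beta_i)+1).$$
Summing over $j$ (using Lemma~\ref{propln}(i)), and using that the $\mu(\gamma_j)$ and the $\mu(\beta_i)$ are non-negative integers so that Lemma~\ref{lnsum} lets us pass between $\ln(\sum\cdot+1)$ and $\sum\ln(\cdot+1)$ on both sides, one gets
$$\ln(\mu_E^{\mathcal{C}}(b)+1)=\ln\Big(\sum_j\mu(\gamma_j)+1\Big)\thicksim\sum_j\ln(\mu(\gamma_j)+1)\preccurlyeq\sum_i\ln(\mu(\beta_i)+1)\thicksim\ln(\mu_E^{\mathcal{B}}(b)+1).$$
This is the desired bound; running the same argument with $(a_{ij})^{-1}$ in place of $(a_{ij})$, i.e. with $\mathcal{B}$ and $\mathcal{C}$ interchanged, gives the opposite one and hence the quasi-equivalence.

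I do not expect a genuine obstacle: the real content is already in Lemmas~\ref{mu2} and~\ref{lnsum}. The one point that must be handled with care is exactly the phenomenon the authors flag before Lemma~\ref{mu2} — $\mu$ itself is \emph{not} quasi-invariant under change of basis, so there is no direct substitution available for $\mu_E$. The resolution is to confine all of that bad behaviour to the logarithmic scale, where the crude multiplicative estimate $\mu(a\beta)\le|pq|\,\mu(\beta)$ and the logarithmic subadditivity of Lemma~\ref{mu2} are enough, everything else reducing to Lemma~\ref{lnsum}.
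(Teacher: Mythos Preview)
Your proof is correct and uses the same two ingredients as the paper's: the scaling bound $\mu(a\beta)\le\mu(a)\mu(\beta)$ and the logarithmic subadditivity of Lemma~\ref{mu2}. The only organizational difference is that the paper decomposes the change of basis into a finite sequence of elementary operations (add one coordinate to another; scale one coordinate by a fixed rational) and checks that each such step preserves $\ln(\mu_E+1)$ up to $\preccurlyeq$, whereas you write each new coordinate directly as $\gamma_j=\sum_i a_{ji}\beta_i$ and iterate Lemma~\ref{mu2} on the full sum. Your route is slightly more direct and avoids the detour through Gaussian elimination; the paper's route has the minor advantage of only ever invoking Lemma~\ref{mu2} in its binary form. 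Either way the content is the same.
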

\begin{proof} Note that by Lemma \ref{ln+1} it suffices to show that for $0\neq b\in\Q^n$,
$$\ln(\mu_E^{\mathcal{B}}(b))\preceq\ln(\mu_E(b)).$$
The coordinates of $b=(\beta_1,\ldots,\beta_n)$ in $\mathcal{B}$ can be obtained from $(\beta_1,\ldots,\beta_n)$ by a successive application of one of these operations:
\begin{itemize}
\item[i)] $\beta_i\mapsto\beta_i+\beta_j$ for some $i,j$,

\item[ii)] $\beta_i\mapsto\tau\beta_i$ for some $i$ and certain $0\neq\tau\in\Q$ belonging to certain set depending on $\mathcal{B}$ only.
\end{itemize}
Any $\tau$ as in ii) belongs to a prefixed set so we have 
$$\mu(\tau\beta)\leq\mu(\tau)\mu(\beta)\sim\mu(\beta).$$
So we only have to check that for $0\neq\beta_1,\beta_2,\beta_1+\beta_2$,
$$\ln\mu(\beta_1+\beta_2)+\ln\mu(\beta_2)\preceq\ln\mu(\beta_1)+\ln\mu(\beta_2).$$
But this is an obvious consequence of Lemma \ref{mu2}.

\end{proof}

As a consequence, we have the $\mu_E$-version of the sum formula (\ref{sum}) but after taking logarithms:
 Let $\Q^n=U_1\oplus\ldots\oplus U_s$ and for $i=1,\ldots,s$ fix an isomorphism $U_i\to\Q^{\dim U_i}$. Let $\mu_E^{U_i}$ denote the obvious version of the $\mu_E$-norm in $U_i$ defined using this isomorphism. Then for $u_i\in U_i$, $i=1,\ldots,s$
\begin{equation}\label{musum} 
\ln\Big(\mu_E(u_1+\ldots+u_s)+1\Big)\sim\ln\Big(\mu^{U_1}_E(u_1)+1\Big)+\ldots+\ln\Big(\mu^{U_s}_E(u_2)+1\Big)\end{equation}
and in particular for $U=U_1$ and $u\in U$,
\begin{equation}\label{muembed} 
\ln(\mu_E(u)+1)\sim\ln(\mu^U_E(u)+1).
\end{equation}
(here the relevant constants depend on the particular subspaces $U_1,\ldots,U_s$). We will omit in the sequel the over indices $U_i$ from the notation.

\section{The lower bound}\label{lower}

%We let $\nu_p({\gamma\over\alpha})$ be the ordinary $p$-adic valuation.

\begin{notation}\label{genmetabelian} Recall that a group is said of {\sl finite Pr\"ufer rank} if there is a bound on the number of elements needed to generate any finitely generated subgroup. We let $G$ be a finitely generated group of finite Pr\"ufer rank with a normal subgroup $B$ such that $B$ and $G/B$ are abelian. Then by \cite[Proposition 1.2]{Boler}, the torsion subgroup $T(B)$ of $B$ is finite. Therefore, in order to estimate word lengths in $B$, we can just pass to the torsion-free group $B/T(B)$. Something analogous happens with the group $G/B$. However to avoid unnecessary complications, we will just assume for now that both $B$ and $Q=G/B$ are torsion-free.  Let $k$ be the rank of $Q$ and $n=\text{dim}_\Q B\otimes \Q$. We choose $a_1,\ldots,a_n\in B$ to be a maximal linearly independent family generating $B$ as $G$-module and $q_1,\ldots,q_k\in G$ such that $\{q_1B,\ldots,q_kB\}$ is a generating system for $Q$. Then
$$\mathcal{X}=\{q_i^{\pm1},a_j^{\pm 1}\}_{1\leq i\leq k,1\leq j\leq n}$$ is a generating system for $G$. \iffalse I think there always exists a family $\{a_1,\ldots,a_n\}$ in these conditions, but I can not find a reference right now. If not, at least it is easy to show that there is such a family generating a finite-index subgroup of $B$, and this would be enough.\fi
Using this generating system, we get an embedding $\varphi:B\to\Q^n$ so that for $A=\langle a_1,\ldots,a_n\rangle$, $\varphi(A)=\Z^n$. We use $\varphi$ to extend the definition of $\mu_E$ to $A$ and  of the  norm $\|\cdot\|$ to $A$. Explicitly, we set, for $b\in B$, $a\in A$:
$$\mu_E(b):=\mu_E(\varphi(b)),$$
$$\|a\|:=\|\varphi(a)\|.$$
In particular in the case when $n=1$ we have $\mu(b):=\mu(\varphi(b))$ and $|a|:=|\varphi(a)|$. By Lemma \ref{inv}, the function $\ln(\mu_E(b)+1)$ is independent, up to quasi-equivalence, on the choice of $\{a_1,\ldots,a_n\}.$ The same fact for $\|\cdot\|$ is well known.
\end{notation}

We are going to  get a bound for word lengths in $G$ by means of the collecting process used by Arzhantseva and Osin in \cite[Lemma 4.9]{ArzhantsevaOsin}.

\begin{theorem}\label{easyineq} Let $G=Q\ltimes B$ be a finitely generated torsion-free group with $Q$  and $B$ abelian of finite Pr\"ufer rank. With notation \ref{genmetabelian} and for any $b\in B$,
$$\ln(\mu_E(b)+1)\preccurlyeq\|b\|_G.$$
\end{theorem}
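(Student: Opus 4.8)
The plan is to run the collecting process of \cite[Lemma 4.9]{ArzhantsevaOsin} directly in the semidirect product $G=Q\ltimes B$. Given $b\in B$, I would fix a geodesic word $x_1\cdots x_\ell$ representing $b$ over the generating set $\mathcal X$, so $\ell=\|b\|_G$, and let $\pi\colon G\to Q$ be the projection. Identifying $B$ with $\varphi(B)\subseteq\Q^n$ and recalling that the $Q$-action on $B$ extends to a linear action of $Q$ on $B\otimes\Q\cong\Q^n$, I would write each $x_t$ in the coordinates of $Q\ltimes B$ as $(\pi(x_t),\gamma_t)$, where $\gamma_t=\pm e_j$ when $x_t$ is an $a$-letter and $\gamma_t$ lies in a fixed finite subset of $\Q^n$ when $x_t$ is a $q$-letter. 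Multiplying out in $Q\ltimes B$ gives $b=\big(\prod_{t}\pi(x_t)\,,\,\sum_t g_t\cdot\gamma_t\big)$ with $g_t=\pi(x_1)\cdots\pi(x_{t-1})$ acting through the $Q$-action; since $b\in B$ the first coordinate is trivial, which leaves the key identity $\varphi(b)=\sum_{t=1}^\ell g_t\cdot\gamma_t$, where each $g_t$ is a product of at most $\ell$ of the finitely many generators of $Q$.

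Next I would bound the numerator and denominator of $\varphi(b)$ from this identity. Writing the generators of $Q$ as matrices $M_1^{\pm1},\dots,M_k^{\pm1}\in\GL_n(\Q)$ and choosing $D\geq1$ to be a common denominator for all their entries and for all the finitely many vectors $\gamma_t$, a product of at most $\ell$ of the $M_i^{\pm1}$ has operator norm at most $C_1^\ell$ (submultiplicativity of the matrix norm) and becomes an integer matrix once multiplied by $D^\ell$. Hence every term $g_t\cdot\gamma_t$, and therefore $\varphi(b)$ itself, can be written over the common denominator $D^\ell$ with an integer numerator vector $v$ satisfying $\|v\|\leq \ell\cdot C_2^\ell$ for a suitable constant $C_2$ (using $|N_{rs}|\leq\|N\|$ for the matrices $N$ occurring, and that there are at most $\ell$ summands). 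Plugging the $j$-th coordinate $v_j/D^\ell$ of $\varphi(b)$ into the definition of $\mu$ and using $\gcd(v_j,D^\ell)^2\geq1$ yields $\mu(v_j/D^\ell)\leq|v_j|\,D^\ell\leq \ell\,(C_2D)^\ell$; summing over $j$ gives $\mu_E(b)\leq n\,\ell\,(C_2D)^\ell$. Taking logarithms and applying the log-taming estimates of Lemma \ref{propln} together with $\ln\ell\preccurlyeq\ell$ and $\ln\big((C_2D)^\ell\big)=\ell\ln(C_2D)$ then collapses the bound to $\ln(\mu_E(b)+1)\preccurlyeq\ell=\|b\|_G$. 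The case $b=1$ is immediate since $\mu_E(1)=0$.

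I expect the only genuinely delicate point to be the bookkeeping of denominators rather than of numerators. The numerator bound $C_2^\ell$ is the standard estimate for a product of $\ell$ matrices of bounded norm; what makes the $\mu$-norm --- which by design is very sensitive to denominators, and behaves badly under addition (Lemma \ref{mu2}) --- still cooperate here is precisely that the denominator of $\varphi(b)$ after $\ell$ collecting steps is no worse than $D^\ell$, so its logarithm is again linear in $\ell$ and gets absorbed into the $\preccurlyeq$. A secondary point to keep straight is that the generators $q_i$ need not lie in a complement of $B$ in $G$, so the vectors $\gamma_t$ attached to $q$-letters need not vanish; but since they range over a fixed finite set this only affects the constants.
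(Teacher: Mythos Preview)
Your argument is correct and is at heart the same collecting process the paper uses: express $\varphi(b)$ as a sum of at most $\ell=\|b\|_G$ terms of the form (product of $\leq\ell$ action matrices)$\cdot$(bounded vector), then observe that both the numerator and the denominator of the result are bounded by $C^\ell$, so $\ln\mu_E(b)\preccurlyeq\ell$. Two small cosmetic points: the precise formula for $g_t$ depends on the semidirect-product convention (the paper's convention would give $g_t=\pi(x_{t+1})\cdots\pi(x_\ell)$ rather than $\pi(x_1)\cdots\pi(x_{t-1})$), and since $G$ is split you may as well take $q_i\in Q$ so that the $\gamma_t$ for $q$-letters vanish; neither affects the estimate.

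The one substantive difference from the paper is in how the denominator is controlled. The paper does not track a rational common denominator $D$ as you do; instead it first invokes the Baumslag--Bieri embedding to pass to a larger group $G_1=Q_1\ltimes B_1$ in which a basis of $Q_1$ acts by \emph{integer} matrices $M_i$, and then performs a more structured collecting (pushing all positive $q_i$'s left and all negative $q_i$'s right) to write $b=q^{-1}cq$ with $c\in A=\Z^n$ and $q=\prod q_i^{\sigma_i(w)}$. The denominator then appears only through $M_q^{-1}=\frac{1}{n_q}N_q$ with $n_q=\prod(\det M_i)^{\sigma_i(w)}$. Your route avoids the embedding result entirely and is therefore more self-contained; the paper's route buys slightly cleaner arithmetic once the reduction to integer matrices is made.
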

\begin{proof} (Collecting process). 
As mentioned in the introduction by the embedding result of Baumslag and Bieri  (\cite{BaumslagBieri}), $G$ embeds in a constructible metabelian group  $G_1=Q_1\ltimes B_1$ so that $B$ embeds in $B_1$, $Q$ embeds into $Q_1$, $B_1$ is abelian of the same Pr\"ufer rank as $B$ and  $Q_1$ is free abelian of finite rank.  Moreover, we may choose a basis of $Q_1$ such that the action of each basis element in $B_1$ is encoded by an integer matrix. As $\|b\|_{G_1}\preccurlyeq\|b\|_G$ for any $b\in B$, we may assume that $G$ itself fulfills these conditions.

 Recall that we use Notation \ref{genmetabelian}.   For each $1\leq i\leq k$, let $M_i$ be the integer matrix representing the action of $q_i$ on $B$ ($M_i$ depends on $\varphi$). All those matrices commute pairwise. Note that the $j$-th column of $M_i$ determines $a_j^{q_i}$ as linear combination of $\{a_1,\ldots,a_n\}$.  Let $n_i=\text{det}M_i$, then $n_i\in\Z$ and there is some integer matrix $N_i$ with $M_i^{-1}={1\over n_i}N_i$.
 
For a word $w$ on $\mathcal{X}$ put 
$$\begin{aligned}\sigma_i(w)&:=\text{ number of instances of $q_i$ in $w$},\\
\alpha_j(w)&:=\text{ number of instances of $a_j^{\pm 1}$ in $w$}.\end{aligned}$$
 Then $$\text{length}(w)=\sum_{j=1}^n\alpha_j(w)+2\sum_{i=1}^k\sigma_i(w).$$
 We let $b\in B$, and note that we may assume that $b\neq 1$ and show the result but for $\ln(\mu_E(b))$. We choose a geodesic word  $w_b$  representing $b$, therefore  $\|b\|_G=\text{length}(w_b)$.
We claim that there is some word $w_c$ in $a_1^{\pm},\ldots,a_n^{\pm}$ only
and some $q\in Q$ such that
$$w_b=_Gq^{-1}w_cq.$$
To see that, we only have to use the equations $a_jq_i=q_ja_j^{q_i}$ and $q_i^{-1}a_j=a_j^{q_i}q_i^{-1}$ to move all the instances of $q_i$ with positive exponent to the left and all the instances with negative exponent to the right. In this process we do not cancel $q_i$ with $q_i^{-1}$; we just apply the commutativity relations in $Q$. Since these equations do not change the number of $q_i$'s, we have
$$q=\prod_{i=1}^k q_i^{\sigma_i(w)}.$$
Let $c\in A$ be the element represented by $w_c$. Let
$$n_q=\prod_{i=1}^k n_i^{\sigma_i(w)},\,
N_q=\prod_{i=1}^k N_i^{\sigma_i(w)}$$ 
and $M_q^{-1}={1\over n_q}N_q$.
With additive notation we have $b=M_q^{-1}c={1\over n_q}N_qc$. Put
$$K_1=\text{max}\{\|N_i\|\mid 1\leq i\leq k\},$$
$$K_2=\text{max}\{\|M_i\|\mid 1\leq i\leq k\},$$
$$K_3=\text{max}\{n_i\mid 1\leq i\leq k\}.$$
Observe that $K_1$, $K_2$ and $K_3$ are positive integer numbers and 
$$\|N_q\|\leq K_1^{\sum_{i=1}^k\sigma_i(w)}.$$
Now, let us look at how is $c$ constructed. Multiplicatively, it is the product of elements of the form
$(a_j^{\pm 1})^{z}$ where $j=1,\ldots,n$, $a_j^{\pm1}$ is one of the $\alpha_j(w)$ instances of $a_j^{\pm1}$ in $w$ and $z$ is a certain product of the $q_i$'s all of them with positive exponent. Again with additive notation this corresponds to certain product of some of the matrices $M_i$ with the vector having all its coordinates zero but for a single 1. The norm of such vector is obviously bounded by $K_2^2\sum_{i=1}^k\sigma_i(w)$. So we get
$$\|c\|\leq K_2^{2\sum_{i=1}^k\sigma_i(w)}(\sum_{j=1}^n\alpha_j(w))$$
and
$$\mu_E(b)\leq n_q\|N_qc\|\leq n_q\|N_q\|\|c\|\leq (K_1K_2^2K_3)^{\sum_{i=1}^k\sigma_i(w)}(\sum_{j=1}^n\alpha_j(w)).$$
Thus 
$$\begin{aligned}
\ln(\mu_E(b))\leq \ln(K_1K_2^2K_3)\sum_{i=1}^k\sigma_i(w)+\ln(\sum_{j=1}^n\alpha_j(w))\preceq \\
2\sum_{i=1}^k\sigma_i(w)+\sum_{j=1}^n\alpha_j(w)=\|b\|_G.\end{aligned}$$

\end{proof}

\begin{corollary}\label{easyineq1} Let $G$ be torsion-free metabelian of finite Pr\"ufer rank. Then for any $b\in G'$,
$$\ln(\mu_E(b)+1)\preccurlyeq\|b\|_G$$
where $\mu_E(b)$ is the $\mu$-norm obtained from some embedding of $G'$ into $\Q^n$ for $n=\dim_\Q G'\otimes\Q$.
\end{corollary}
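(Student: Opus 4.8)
The plan is to reduce Corollary \ref{easyineq1} to Theorem \ref{easyineq} by replacing the metabelian group $G$ with a suitable split metabelian group into which the relevant data embed in a distance-non-increasing way. The subtlety is that in Theorem \ref{easyineq} we assumed $G=Q\ltimes B$ with $Q$ and $B$ \emph{torsion-free} abelian and $B$ playing the role of the base group, whereas for a general torsion-free metabelian $G$ of finite Pr\"ufer rank we only know that $G'$ is abelian and $G/G'$ is abelian — there need be no splitting, and $G/G'$ need not be torsion-free. So the first task is to massage $G$ into the shape Theorem \ref{easyineq} can handle without distorting distances in $G'$.

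\medskip\noindent\textbf{Step 1: pass to a finite-index torsion-free quotient of $Q=G/G'$.} Since $G$ has finite Pr\"ufer rank, $Q=G/G'$ is a finitely generated abelian group of finite rank, hence $T(Q)$ is finite and $Q/T(Q)\cong\Z^k$. Let $N$ be the preimage in $G$ of $T(Q)$; then $G/N\cong\Z^k$ is free abelian of finite rank, $N$ contains $G'$, $N/G'$ is finite, and $G'=N'\cdot(\text{something})$—more precisely $N$ is metabelian with $N'\subseteq G'$ and $[N:G']$ finite. Actually it is cleaner to observe that $G$ contains a finite-index normal subgroup $H$ that splits as $H=Q_0\ltimes B_0$ with $Q_0$ free abelian and $B_0$ torsion-free abelian of finite Pr\"ufer rank: indeed one may take $B_0$ to be a torsion-free abelian normal subgroup of finite index in $G'$ (using that $T(G')$ is finite by \cite[Proposition 1.2]{Boler}), and lift a basis of $G/G'$ modulo torsion to obtain a complementary free abelian $Q_0$ on a finite-index subgroup. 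Because $[G:H]<\infty$, the inclusion $H\hookrightarrow G$ is a quasi-isometric embedding, so $\|\cdot\|_H\sim\|\cdot\|_G$ on $H$, and in particular on $G'\cap H=B_0$ (which has finite index in $G'$).

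\medskip\noindent\textbf{Step 2: apply Theorem \ref{easyineq} to $H=Q_0\ltimes B_0$.} For $b\in B_0$ we obtain $\ln(\mu_E(b)+1)\preccurlyeq\|b\|_H\sim\|b\|_G$, where $\mu_E$ here is computed from the embedding $\varphi\colon B_0\hookrightarrow\Q^n$ coming from Notation \ref{genmetabelian}. Since $B_0$ has finite index in $G'$, the inclusion $B_0\hookrightarrow G'$ extends to an isomorphism after $\otimes\Q$, so $n=\dim_\Q G'\otimes\Q$ as required; moreover by Lemma \ref{inv} (quasi-invariance of $\ln(\mu_E(\blah)+1)$ under change of basis of $\Q^n$) the choice of embedding does not matter up to quasi-equivalence, so the $\mu_E$ appearing in the statement of the corollary and the one coming from $\varphi$ agree up to $\sim$.

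\medskip\noindent\textbf{Step 3: extend from $B_0$ to all of $G'$.} It remains to cover the finitely many cosets of $B_0$ in $G'$. Fix coset representatives $g_1,\dots,g_m\in G'$; for arbitrary $b\in G'$ write $b=b_0 g_r$ with $b_0\in B_0$. Then $\|b_0\|_G\leq\|b\|_G+\max_r\|g_r\|_G$, so $\ln(\mu_E(b_0)+1)\preccurlyeq\|b\|_G$, and by the triangle-type inequality for $\mu$ after logarithms (Lemma \ref{mu2}, or rather its vector form via Lemma \ref{inv}/\eqref{musum}) together with $\ln(\mu_E(g_r)+1)=O(1)$ we get $\ln(\mu_E(b)+1)\preccurlyeq\ln(\mu_E(b_0)+1)+O(1)\preccurlyeq\|b\|_G$. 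This is the desired bound.

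\medskip\noindent\textbf{Main obstacle.} The technical heart is Step 1: producing a finite-index subgroup of $G$ that genuinely \emph{splits} as (free abelian)$\,\ltimes\,$(torsion-free abelian of finite Pr\"ufer rank), so that Theorem \ref{easyineq} applies verbatim. Killing the torsion of $G/G'$ is routine, but lifting a complement requires care—in general an extension of $\Z^k$ by an abelian group need not split, so one must either pass to a further finite-index subgroup on which the relevant cohomology class vanishes (possible because $\Z^k$ is free, so once the base is torsion-free one can always choose a lift of a basis and check it generates a complementary free abelian subgroup of finite index), or instead invoke the Baumslag--Bieri embedding $G\hookrightarrow G_1=Q_1\ltimes B_1$ directly as in the proof of Theorem \ref{easyineq} and run the collecting argument there. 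The latter route is arguably cleanest: since $\|b\|_{G_1}\preccurlyeq\|b\|_G$ for $b\in G'\subseteq B_1$, and $G_1$ already has the split torsion-free form, Theorem \ref{easyineq} applied to $G_1$ gives $\ln(\mu_E(b)+1)\preccurlyeq\|b\|_{G_1}\preccurlyeq\|b\|_G$ on the nose, and Step 3 is then unnecessary. I would present the proof in this form, remarking that the $\mu_E$ computed in $B_1$ restricts (up to $\sim$, by \eqref{muembed}) to the one computed from any embedding $G'\otimes\Q\hookrightarrow\Q^n$.
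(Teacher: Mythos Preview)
Your chosen route --- embed $G$ in a split metabelian overgroup $G_1=Q_1\ltimes B_1$, apply Theorem \ref{easyineq} there, and pull the inequality back via $\|\cdot\|_{G_1}\preccurlyeq\|\cdot\|_G$ together with \eqref{muembed} and Lemma \ref{inv} to identify the two $\mu_E$-norms --- is correct and is essentially the paper's proof. The only difference is in the citation: the paper uses Baumslag \cite{Baumslag} and Boler \cite{Boler} (rather than \cite{BaumslagBieri}) to manufacture a split overgroup of the specific shape $G_1=(G/G')\ltimes B$ with $\psi(G')\leq B$, and then passes to a torsion-free finite-index subgroup of $G_1$ before invoking Theorem \ref{easyineq}.

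Be aware, though, that the alternative you sketch in Steps 1--3, and again as option (a) in your ``Main obstacle'' paragraph, is not merely awkward but actually false. The integer Heisenberg group $N$ is torsion-free metabelian of finite Pr\"ufer rank with $N'\cong\Z$ equal to the centre and $N/N'\cong\Z^2$, yet \emph{no} finite-index subgroup $K\leq N$ splits over $K\cap N'$: any such $K$ is again $2$-step nilpotent and non-abelian, $K'\leq K\cap N'$ is central in $K$, and an abelian complement to $K\cap N'$ would force $K$ to be abelian. Equivalently, any two lifts of a basis of $\Z^2$ to $N$ have nontrivial commutator (the central generator), and passing to powers or finite-index sublattices does not kill it; the extension class in $H^2(\Z^2,\Z)$ survives on every finite-index subgroup, contrary to what you assert. (One \emph{can} split $N$ over a larger abelian normal subgroup, $N=\langle x\rangle\ltimes\langle y,z\rangle$, but then $B_0$ no longer has finite index in $G'$, so your Step 3 and the rank-matching in Step 2 would both need to be reworked.) So it is right that you ultimately abandoned this approach; the embedding into a larger split group is not a convenience here but a necessity.
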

\begin{proof} Using \cite{Baumslag} and \cite{Boler} one can construct an embedding $\psi:G\to G_1$ for $G_1$ a split metabelian group of the form $G_1=G/G'\ltimes B$ with $B$ abelian of finite Pr\"ufer rank so that $\psi(G')\leq B$. As $G/G'$ might contain torsion, $G_1$ might not be torsion-free but obviously it has a finite-index normal subgroup which is torsion-free and that satisfies the hypothesis of  Theorem \ref{easyineq}. Therefore Theorem \ref{easyineq} implies that for any $b\in G'$ 
$$\ln(\mu_E(\psi(b))+1)\preccurlyeq\|\psi(b)\|_{G_1}\leq\|b\|_{G}.$$
By Lemma \ref{inv}, $\ln(\mu_E(\psi(b))+1)\sim\ln(\mu_E(b)+1)$. 
\end{proof}

\section{The upper bound}\label{upper bound}

\noindent We assume that $G$ is  torsion-free finitely generated of finite Pr\"ufer rank with a  normal subgroup $B$ such that $B$ and $G/B$ are normal. The objective of this Section is to show that if moreover $G$ acts semisimply on $B$ and  there is some $g\in G$ acting on $B$ with no eigenvalue of complex norm 1 (recall that by eigenvalues of this action we mean eigenvalues of the rational matrix representing the action of $g$), then   the opposite inequality to Theorem \ref{easyineq} holds true; that is, for any $b\in B$ 
\begin{equation}\label{upperbound}\|b\|_G\preceq\ln(\mu_E(b)+1)\end{equation}
 (here, $\mu_E$ is defined using the generating system $\mathcal{X}$ of Notation \ref{genmetabelian}). % thus completing the proof of Theorem \ref{main1}. 
To show this we can not use the strategy that worked fine in Section \ref{lower}, namely to embed our group in a larger, nicer group. But we can do the opposite: reduce the problem to a suitable subgroup $G_1\geq B$ since then $\|b\|_{G}\preceq\|b\|_{G_1}$. For example, we may go down a finite-index subgroup and assume that $Q=G/B$ is torsion-free (in this case, in fact, one has $\|b\|_G\thicksim\|b\|_{G_1}$ for $b\in B$).

\subsection{Reduction to abelian-by-cyclic groups.} 

We are going to use Bieri-Strebel invariants in the proof of the next result. Recall that the Bieri-Strebel invariant of a finitely generated group $G$ is a subset denoted $\Sigma_1(G)$ of the set $S(G)$ of group homomorphisms from $G$ to the additive group $\R$ having the following property: for any subgroup $H$ with $G'\leq H\leq G$, $H$ is finitely generated if and only if $\chi(H)\neq 0$ for any $\chi\in \Sigma^c_1(G)=S(G)\setminus \Sigma_1(G)$ (see Bieri, Neumann and Strebel \cite[Theorem B]{BieriNeumannStrebel}). The reader is referred to Bieri, Neumann and Strebel \cite{BieriNeumannStrebel} and  Bieri and Renz \cite{BieriRenz}  for notation and further properties of these invariants.

\begin{proposition}\label{reduction} Let $G$ be a finitely generated torsion-free metabelian group of finite Pr\"ufer rank with $G'\leq B\leq G$ abelian. Then there is some $t\in G$ such that the group $H=\langle t,B\rangle$ is finitely generated. If there is some $g\in G$ acting on $B$ with no eigenvalue of complex norm 1, then $t$ can be chosen having the same property.
\end{proposition}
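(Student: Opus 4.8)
The plan is to feed the candidate subgroup $H=\langle t,B\rangle$ into the Bieri--Neumann--Strebel finiteness criterion recalled above. Since $G'\leq B\leq H\leq G$, the group $H$ is finitely generated if and only if $\chi(H)\neq 0$ for every $[\chi]\in\Sigma^c_1(G)$. Now $\chi(H)\neq 0$ holds automatically whenever $\chi(B)\neq 0$, so the only characters that matter are those in $\Sigma_0:=\{[\chi]\in\Sigma^c_1(G):\chi(B)=0\}$. As every $\chi$ kills $G'$, the condition $\chi(B)=0$ means exactly that $\chi$ factors through $Q:=G/B$, so $\Sigma_0=\Sigma^c_1(G)\cap S(Q)$ once we view $S(Q)\subseteq S(G)$; moreover for $[\chi]\in\Sigma_0$ one has $\chi(H)=\Z\,\chi(t)=\Z\,\bar\chi(\bar t)$, where $\bar t$ is the image of $t$ in $Q$ and $\bar\chi\in S(Q)$ the character induced by $\chi$. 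Hence the problem becomes: find $\bar t\in Q$ lying outside $\ker\bar\chi$ for every $[\bar\chi]\in\Sigma_0$, and lift it to the desired $t$.

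The key point is that $\Sigma_0$ is a \emph{finite} set. By the Bieri--Strebel description, $\Sigma^c_1(G)$ is the complement, inside $S(G)=S(G/G')$, of the geometric invariant of $G'$ viewed as a $\Z[G/G']$-module; and the finite Pr\"ufer rank hypothesis forces $G'\otimes\Q$ to be finite dimensional, so this module has only finitely many associated points (its ``eigenvalues''), whence only finitely many valuations of the relevant number fields contribute and $\Sigma^c_1(G)$ is a finite subset of $S(G)$. A fortiori $\Sigma_0$ is finite. Each $[\bar\chi]\in\Sigma_0$ has $\bar\chi\neq 0$, so $Q/\ker\bar\chi$ embeds in $\R$ and is therefore infinite; thus $\ker\bar\chi$ spans a proper $\Q$-subspace of $Q\otimes\Q$. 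Since a $\Q$-vector space is never a finite union of proper subspaces, there is a vector of $Q\otimes\Q$ avoiding all of these subspaces, and any lattice point on the ray through it provides the required $\bar t\in Q$. Lifting $\bar t$ to $t\in G$, the subgroup $H=\langle t,B\rangle$ is finitely generated, which is the first assertion.

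For the refinement I would keep track of one further finite family of forbidden choices. The conjugation action of $G$ on $B$ factors through $\overline Q:=G/C_G(B)$, a quotient of $G/G'$ (because $G'\leq B\leq C_G(B)$), and since $B$ is torsion-free it is recorded faithfully on the finite-dimensional space $B\otimes\Q$; fix generators of $\overline Q$ modulo torsion and let $\lambda^{(1)},\dots,\lambda^{(r)}$ be the finitely many eigenvalue-tuples of their action. An element of $G$ whose image in $\overline Q$ has coordinates $(a_1,\dots,a_m)$ acts on $B$ with some eigenvalue of complex norm $1$ exactly when $\sum_i a_i\log|\lambda^{(j)}_i|=0$ for some $j$; the hypothesis that some $g\in G$ acts with no eigenvalue of norm $1$ says precisely that each tuple $\lambda^{(j)}$ has an entry of norm $\neq 1$, so each of those equations cuts out a proper subgroup of $\overline Q$, whose preimage in $Q$ again spans a proper $\Q$-subspace of $Q\otimes\Q$. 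Adjoining these finitely many subspaces to the family $\{\ker\bar\chi\}_{[\bar\chi]\in\Sigma_0}$ and using once more that $Q\otimes\Q$ is not a finite union of proper subspaces, we find $\bar t$, hence $t\in G$, which simultaneously makes $\langle t,B\rangle$ finitely generated and acts on $B$ with no eigenvalue of norm $1$. (If $Q$ is finite then $B$ has finite index in $G$, hence is finitely generated and $t=1$ works; in that case $\overline Q$ is finite, so the eigenvalue hypothesis forces $B\otimes\Q=0$, i.e.\ $B=1$, and nothing further is needed.)

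The step I expect to require the most care is the finiteness of $\Sigma^c_1(G)$: this is exactly where the finite Pr\"ufer rank hypothesis is used, and it must be drawn carefully from the Bieri--Strebel identification of $\Sigma_1$ with data attached to the finite-rank $\Z[G/G']$-module $G'$ (finitely many associated primes, hence finitely many contributing valuations). Everything else---the reduction to the characters in $\Sigma_0$, and the ``avoid finitely many proper subspaces'' argument---is elementary.
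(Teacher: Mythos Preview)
Your argument is correct and, for the first assertion, matches the paper's: both use the BNS criterion, the finiteness of $\Sigma_1^c(G)$ for metabelian groups of finite Pr\"ufer rank (the paper simply cites Meinert \cite[Theorem~2.5]{meinert2}; your valuation sketch is the right heuristic but a citation is cleaner here), and then pick $t$ whose image in $Q$ avoids the finitely many kernels. For the refinement the routes diverge. The paper keeps the $t$ already produced and perturbs it along the one-parameter family $t_1=t^{-n}g$: since $\chi_i(t_1)=-n\chi_i(t)+\chi_i(g)$ vanishes for at most one $n$ per character, and the eigenvalues $\lambda_1^{-n}\lambda_2$ of $t_1$ (for paired eigenvalues $\lambda_1,\lambda_2$ of $t,g$) have norm $1$ for at most one real $n$ per pair because $|\lambda_2|\neq 1$, all but finitely many integers $n$ work. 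You instead note that each condition $\sum_i a_i\log|\lambda^{(j)}_i|=0$ is the kernel of a nonzero homomorphism $\overline Q\to\R$, hence of infinite index, so its preimage spans a proper subspace of $Q\otimes\Q$; adjoining these to the BNS kernels and avoiding the whole finite family at once yields $t$ directly. Your approach is more uniform and structural; the paper's is more elementary, needing only a one-parameter search rather than an analysis of how the norm-$1$ locus sits inside $Q$.
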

\begin{proof} As $G$ has finite Pr\"ufer rank,  by \cite[Theorem 2.5]{meinert2} the complement of the Bieri-Strebel invariant $\Sigma_1(G)^c$ is finite.
% moreover it consists of characters $\chi$ for which $G/\text{Ker}\chi$ has rank 1. Set
$$\{\chi_1,\ldots,\chi_r\}=\{\chi\in\Sigma_1^c(G)\mid\chi(B)=0\}.$$
If $r=0$, then $B$ itself would be finitely generated and we may choose any $t\not\in B$ so we assume $r\neq 0$. For an arbitrary  $t\in G$, the subgroup $H=\langle t,B\rangle$ is finitely generated if and only if $\chi_i(t)\neq 0$ for $i=1,\ldots,r.$ So it suffices to choose any $t\in G$ with $t\not\in\cup_{i=1}^k\text{Ker}\chi_i$, this can be done as each $\text{Ker}\chi_i/B$ is a subgroup of $G/B$ of co-rank at least 1. 

Now, assume that there is some $g\in G$ acting on $B$ with no eigenvalues of complex norm 1 (in particular, this implies $t\not\in B$). Let $t$ be the element chosen above. We claim that there is some integer $n$ such that $t_1:=t^{-n}g$ has the desired properties. Note that
$\chi(t_1)=\chi_i(t^{-n}g)=-n\chi_i(t)+\chi_i(g)$
thus $n\neq \chi_i(g)/\chi_i(t)$ implies $\chi_i(t_1)\neq 0$.
 
Also, as $[g,t]\in B$, the matrices representing the action of $g$ and $t$ on $B$ commute thus the eigenvalues of $t_1$ are of the form $\lambda_1^{-n}\lambda_2$ for $\lambda_1$ eigenvalue of $t$ and $\lambda_2$ eigenvalue of $g$. As $\lambda_2\bar\lambda_2\neq 1$, we see that for all except at most finitely many values of $n$, the eigenvalues of $t_1$ also have complex norm $\neq 1$. So we only have to choose an $n$ not in that finite set and distinct from $\chi_i(g)/\chi_i(t)$ for $i=1,\ldots,r$ and we get the claim. 
\end{proof}

As $\|b\|_G\leq\|b\|_H$ for any $b\in B$, 
 from now on we may assume the following:
\begin{equation}\label{hypo3}\Big\{\begin{aligned}G=\langle t\rangle \ltimes B\text{ torsion-free  with $B\subseteq\Q^n$ and $t$ acting on $B$ via}\\
\text{a semisimple matrix with no eigenvalue of complex norm 1}.\end{aligned}\end{equation}

\subsection{The case when $B$ has Pr\"ufer rank 1} In this case, we assume
%$\varphi(B)\subseteq\Q$. To avoid unnecessary complications, we will omit $\varphi$ from our notation and we will assume just that 
$B\subseteq_\varphi\Q$ and $A=_\varphi\Z$ (but we note that the precise identification depends on the chosen generating system  $\{t^{\pm1},a^{\pm1}\}$ ). Then $t$ acts on $B$ by multiplication by some  $0\neq\lambda=x/y\in\Q$ and up to swapping $t$ and $t^{-1}$ we may assume $|\lambda|>1$. The group $G$ is finitely presented if and only if  $\lambda\in\Z$, but note that we are not assuming that. Note also that $B=_\varphi\Z[\lambda^{\pm 1}]$.

\begin{proposition}\label{rank1A} Let $\lambda\in\Q$ and  $G=\langle t\rangle \ltimes B$ with $\varphi:B\to\Z[\lambda^{\pm 1}]$  an isomorphism and $t$ acting on $B$ via $\varphi$ and multiplication with $\lambda$. Then for $a\in A=\varphi^{-1}(\Z)$, 
$$\|a\|_G\preccurlyeq\ln(|a|+1)$$
where the relevant constants depend on $\lambda$.
\end{proposition}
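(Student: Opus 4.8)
The plan is to reduce the statement to an estimate $\|a_1^m\|_G\leq C\ln m+C$ for every positive integer $m$, where $a_1\in A$ is the generator with $\varphi(a_1)=1$; once this is known, the case $m=0$ is trivial and the case $m<0$ follows from $\|g\|_G=\|g^{-1}\|_G$ and $|a|=|{-m}|$, yielding $\|a\|_G\preccurlyeq\ln(|a|+1)$ with constants depending on $\lambda$. Write $\lambda=x/y$ with $y\geq 1$ and $\gcd(x,y)=1$; since $|\lambda|>1$ we have $p:=|x|>y\geq 1$. Up to replacing $t$ by $t^{-1}$ we may assume that $t^{-1}a_1t$ represents $\varphi^{-1}(\lambda)\in B$, so that, since $B$ is abelian, for any $k,\ell\in\Z$ the word $(t^{-1}a_1^{k}t)a_1^{\ell}$ represents $\varphi^{-1}(k\lambda+\ell)$ and has length at most $|k|+|\ell|+2$.

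The heart of the argument is a single ``digit extraction'' step in base $\lambda$, iterated via an induction on $m$. Given $m\geq 1$, I take the integer $c\in\{0,1,\dots,p-1\}$ with $c\equiv ym\pmod p$ and set $m':=(ym-c)/x\in\Z$; then $m=\lambda m'+c/y$ and, crucially, $|m'|=(ym-c)/p\leq (y/p)\,m<m$ — this is the only place where $|\lambda|>1$ is used, and it forces the recursion $m\mapsto m'$ to reach the base range after $O(\ln m)$ steps. To promote the identity $m=\lambda m'+c/y$ to a bounded-length word I use the Bézout relation: fix once and for all integers $u,v$ with $ux+vy=1$, so that $1/y=u\lambda+v$ and hence $c/y=(cu)\lambda+(cv)$. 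Thus $\varphi^{-1}(c/y)$ is represented by $w_c:=(t^{-1}a_1^{cu}t)a_1^{cv}$, of length at most $2+(p-1)(|u|+|v|)=:C_0$, a constant depending only on $\lambda$. Since $\varphi^{-1}(\lambda m')$ is represented by $t^{-1}a_1^{m'}t$ and $B$ is abelian, $a_1^m$ is represented in $G$ by $(t^{-1}a_1^{m'}t)\,w_c$, whence
$$\|a_1^m\|_G\leq\|a_1^{m'}\|_G+C_0+2=\|a_1^{|m'|}\|_G+C_1,\qquad C_1:=C_0+2.$$

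Finally I close the induction. If $m\leq p$ then $\|a_1^m\|_G\leq m\leq p$, so $\|a_1^m\|_G\leq C\ln m+C$ holds for all such $m$ provided $C\geq p$. If $m>p$ then $1\leq|m'|<m$ (indeed $m'=0$ would force $ym=c<p$, contradicting $m>p\geq p/y$), so the inductive hypothesis together with $|m'|\leq (y/p)m$ gives
$$\|a_1^m\|_G\leq C\ln|m'|+C+C_1\leq C\ln m-C\ln(p/y)+C+C_1,$$
which is at most $C\ln m+C$ as soon as $C\geq C_1/\ln(p/y)=C_1/\ln|\lambda|$. Taking $C:=\max\{p,\ C_1/\ln|\lambda|\}$ completes the induction and hence the proof. (Unwinding the recursion amounts to writing $m$ in the rational base-$(x/y)$ numeration system and evaluating by Horner's scheme; the inductive phrasing above just avoids invoking that machinery.) I do not expect a serious obstacle here: the two points that need care are the uniform length bound on the auxiliary words $w_c$, for which the coprimality of $x$ and $y$ is exactly what is used, and the bookkeeping of signs when $x<0$, which is absorbed above by working throughout with $|m'|$ and $p=|x|$.
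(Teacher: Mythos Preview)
Your proof is correct and follows the same base-$\lambda$ expansion/Horner strategy as the paper. The one notable difference is in the digit extraction: the paper writes $m=xc+r$ with $0\le r<x$ and recurses via $m\mapsto cy$, so each digit $r$ is already an integer and is represented simply by $a_1^{r}$; your recursion $m'=(ym-c)/x$ produces rational digits $c/y$ and therefore needs the B\'ezout identity to bound the length of $w_c$---a correct but avoidable complication, which the paper sidesteps at the modest price of first passing to a finite-index subgroup to arrange $\lambda>0$.
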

\begin{proof} By going down to a finite-index subgroup if necessary, we may assume that $\lambda=x/y$ with coprime integers $x,y>0$. 
  We claim that there is a minimal $\lambda$-adic expression of any $m\in\Z$, i.e., that there is $s\geq 0$ such that
$$m=\pm(\sum_{i=0}^ sc_{i}\lambda^{i})$$
with $c_i\in\Z$ and $0\leq c_i<x$ for each $0\leq i\leq s$. To see it, we assume $m>0$ and use the Euclidean Algorithm to determine $c,r$ with $m=xc+r$ such that $0\leq r<x$ and $0\leq c<m$. Then we have
$$m=\lambda(cy)+r$$
and $\lambda cy=m-r\leq m$ thus $ cy<m$. This means that we can repeat the process and obtain an expression as desired.
Now,  
$$\lambda^{s}\leq c_{0}+c_{1}\lambda+\ldots c_s\lambda^{s}=|m|$$ 
thus 
$$s\ln\lambda\leq\ln(|m|+1)=\ln(|a|+1).$$
On the other hand, writing the $\lambda$-adic expression of $a=a_1^m$ multiplicatively using the generating system $\{t^{\pm1},a_1^{\pm1}\}$ we get
$$a=a_1^m=a_1^{c_0}t^{-1}a_1^{c_1}t^{-1}\ldots t^{-1}a_1^{c_s}t^s$$
so
 $$
\|a\|_G\leq 2s+\sum_{i=0}^s c_{i}<s(2+x)\preceq\ln(|a|+1).$$
\end{proof}

\begin{lemma}\label{Bezout}(bounded Bezout's identity) Let $0\neq x_1,y_1\in\Z$ coprime integers. Then for any $d\in\Z$ there are $a,c\in\Z$ such that $|a|$, $|c|\leq|d||x_1||y_1|$ and
$$d=x_1a+y_1c.$$
\end{lemma}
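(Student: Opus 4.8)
The statement to prove is the bounded B\'ezout identity: given coprime integers $x_1,y_1$ and any $d\in\Z$, there exist $a,c\in\Z$ with $|a|,|c|\le |d||x_1||y_1|$ and $d=x_1a+y_1c$.

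\medskip

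The plan is to start from any B\'ezout representation and then reduce the coefficients modulo the obvious ambiguity. First I would invoke the ordinary B\'ezout identity: since $\gcd(x_1,y_1)=1$ there exist integers $u,v$ with $x_1u+y_1v=1$, hence $a_0:=du$, $c_0:=dv$ satisfy $x_1a_0+y_1c_0=d$. The general solution set is $a=a_0 - y_1 k$, $c=c_0 + x_1 k$ for $k\in\Z$, because the homogeneous equation $x_1a+y_1c=0$ has solution lattice generated by $(-y_1,x_1)$ (here coprimality of $x_1,y_1$ is exactly what makes this the full solution set). So I would choose $k$ to be the nearest integer to $a_0/y_1$, which forces $|a|=|a_0-y_1k|\le |y_1|/2 < |y_1| \le |d||x_1||y_1|$ (assuming $d,x_1\neq 0$; the degenerate cases $d=0$ or $x_1=0$ or $y_1=0$ are trivial and can be dispatched first). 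Then for that same $k$ one recovers $c$ from $c=(d-x_1a)/y_1$, and bounds it directly: $|c| = |d-x_1a|/|y_1| \le (|d| + |x_1|\cdot|y_1|)/|y_1| = |d|/|y_1| + |x_1| \le |d||x_1||y_1|$, using $|y_1|\ge 1$ and $|d|,|x_1|\ge 1$ for the final inequality.

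\medskip

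The only mildly delicate point is making the final numerical estimate for $|c|$ come out against the stated bound $|d||x_1||y_1|$ rather than something slightly larger; the clean way is to first reduce $a$ below $|y_1|$ (not just $|y_1|/2$ — either works) and then read off $c$ from the equation, since solving for $c$ automatically keeps it small once $a$ is small. One should also double-check signs and the edge cases where one of the three integers is zero, but those are immediate: if $d=0$ take $a=c=0$; if $x_1=0$ then $y_1=\pm1$ and $c=\pm d$, $a=0$ works; symmetrically for $y_1=0$. So the main obstacle, such as it is, is purely bookkeeping on the constants — there is no real mathematical difficulty, and no step here that I expect to require more than the division algorithm and the structure of the solution set of a single linear Diophantine equation.
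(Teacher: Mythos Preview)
Your approach is essentially the paper's: start from an arbitrary B\'ezout representation and shift along the solution lattice $(a,c)\mapsto(a-y_1,c+x_1)$ to force one coefficient small, then read off the other from the equation. The paper reduces $c$ modulo $x_1$ while you reduce $a$ modulo $y_1$, which is symmetric and makes no real difference.

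There is, however, a genuine slip in your final bookkeeping. The inequality
\[
\frac{|d|}{|y_1|}+|x_1|\ \le\ |d|\,|x_1|\,|y_1|
\]
is \emph{not} a consequence of $|d|,|x_1|,|y_1|\ge 1$: take $|y_1|=|d|=1$, so the left side is $1+|x_1|$ while the right side is $|x_1|$. (In that regime your construction actually forces $a=0$ by integrality, whence $|c|=|d|$, which does lie below the bound --- so the lemma is fine, but your estimate is too coarse to detect it.) The paper sidesteps this by first reducing to $d=1$: there one obtains $0<c<|x_1|$ and $|a|\le|y_1|$ directly, and then multiplying both coefficients by $d$ gives $|da|\le|d|\,|y_1|\le|d|\,|x_1|\,|y_1|$ and $|dc|<|d|\,|x_1|\le|d|\,|x_1|\,|y_1|$ with no further case split. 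Your argument is repaired the same way, or alternatively by exploiting the sharper bound $|a|\le|y_1|/2$ together with integrality of $a$ to handle $|y_1|=1$ separately.
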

\begin{proof} Obviously it suffices to consider the case $d=1$ and we may assume $x_1,y_1\neq\pm 1$.  From Brunault \cite{Brunault} in an answer to a question by Denis Osin on mathoverflow:

We take first $a_1,c_1\in\Z$ arbitrary with $1=x_1a_1+y_1c_1$.
We let $b,c\in\Z$ with $c_1=x_1b+c$ and $0< c<|x_1|$. Then putting $a=a_1+y_1b$,
$$1=x_1a_1+y_1c_1=x_1a_1+x_1y_1b+y_1c=x_1a+y_1c.$$
 We have $x_1a=1-y_1c$ thus $a={1\over x_1}-y_1{c\over x_1}$ and
$$|a|\leq |{1\over x_1}|+|y_1||{c\over x_1}|<|{1\over x_1}|+|y_1|$$
and as $a,y_1\in\Z$ we deduce $|a|\leq|y_1|$.
\end{proof}

\begin{proposition}\label{rank1B} Let $\lambda\in\Q$ and  $G=\langle t\rangle \ltimes B$ with $\varphi:B\to\Z[\lambda^{\pm 1}]$  an isomorphism and $t$ acting on $B$ via $\varphi$ and multiplication with $\lambda$. Then for $b\in B$, 
$$\|b\|_G\preccurlyeq\ln(\mu(b)+1)$$
where the relevant constants depend on $\lambda$.
\end{proposition}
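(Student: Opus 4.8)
The plan is to exhibit an explicit short word for $b$ by chaining two reductions together with Proposition~\ref{rank1A}. Arguing exactly as in the proof of Proposition~\ref{rank1A}, after passing if necessary to a finite-index subgroup we may assume $\lambda=x/y$ with coprime integers $x>y\geq 1$ (so $x\geq 2$); this finite-index group again satisfies the hypotheses, so Proposition~\ref{rank1A} remains available. We may also assume $b\neq 0$, and write $\varphi(b)=k/m$ in lowest terms, so that $\mu(b)=|k|m\geq\max\{|k|,m\}$. Since $B=_\varphi\Z[\lambda^{\pm 1}]$, every prime dividing $m$ divides $xy$; write $m=m_xm_y$ with $m_x$ (resp. $m_y$) supported on the primes of $x$ (resp. of $y$), so $\gcd(m_x,m_y)=1$, and let $L$ be the least integer with $m_x\mid x^L$ and $m_y\mid y^L$. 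The key quantitative observation is that the exponent needed to clear a denominator is only logarithmic in it, so $L\preccurlyeq\ln(\mu(b)+1)$. If $m=1$ the statement is Proposition~\ref{rank1A}, so we assume $m>1$, hence $L\geq 1$.

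\emph{Step 1 (clear the denominator to a power of $x$, by conjugation).} Conjugating $b$ by a suitable $t^{\pm L}$ costs at most $2L$ in word length and produces $b'\in B$ with $\varphi(b')=\lambda^{-L}\varphi(b)=y^Lk/(x^Lm)$. Since $m_y\mid y^L$ and $m_x\mid x^L$, this equals $k''/x^{e}$ with $e:=2L$ and $k'':=(y^L/m_y)(x^L/m_x)\,k\in\Z$, $|k''|\leq (xy)^L|k|$, so by Lemma~\ref{propln} (parts~iv and~v) $\ln(|k''|+1)\preccurlyeq\ln(|k|+1)+L\preccurlyeq\ln(\mu(b)+1)$. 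Thus $\|b\|_G\leq 2L+\|b'\|_G$, and it suffices to bound $\|b'\|_G$.

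\emph{Step 2 (split off the integer part via bounded Bézout).} Apply Lemma~\ref{Bezout} to the coprime pair $x^e,y^e$ with $d=k''$: there are $a,c\in\Z$ with $|a|,|c|\leq|k''|(xy)^e$ and $k''=ax^e+cy^e$. Then $\varphi(b')=k''/x^e=a+c\lambda^{-e}$, so $b'$ is the product of $a_1^{a}$ with the $t^{\pm e}$-conjugate of $a_1^{c}$ realizing multiplication by $\lambda^{-e}$; hence, using geodesic words for $a_1^a$ and $a_1^c$ and Proposition~\ref{rank1A},
\[
\|b'\|_G\leq\|a_1^{a}\|_G+\|a_1^{c}\|_G+2e\preccurlyeq\ln(|a|+1)+\ln(|c|+1)+e\preccurlyeq\ln(|k''|+1)+e .
\]
Combining the two steps, $\|b\|_G\preccurlyeq\ln(|k''|+1)+e+2L\preccurlyeq\ln(|k''|+1)+L\preccurlyeq\ln(\mu(b)+1)$, which is the claim.

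The point that makes this work—and the step I expect to be the real obstacle—is the decision to clear the \emph{entire} denominator down to a single power of $x$ by conjugation and then remove the integer part in \emph{one} application of Bézout, rather than expanding $b$ digit-by-digit as in Proposition~\ref{rank1A}. A naive $\lambda$-adic expansion of the fractional part of $b$ would produce $\Theta(e)$ summands of the shape $r_i/x^i$, each of cost $\Theta(i)$, giving a useless quadratic bound $\Theta(e^2)$; the telescoping of the $t$'s only pays off when the expansion is generated globally, which is precisely what the Bézout step accomplishes. The remaining work is the bookkeeping that each of $L$, $e$, $|k''|$, $|a|$, $|c|$ has logarithm $\preccurlyeq\ln(\mu(b)+1)$, which is routine given $\mu(b)\geq\max\{|k|,m\}$ and Lemma~\ref{propln}.
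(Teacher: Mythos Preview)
Your argument is correct. Both your proof and the paper's hinge on Proposition~\ref{rank1A} together with the bounded B\'ezout identity (Lemma~\ref{Bezout}), but they organise the decomposition differently. The paper keeps the mixed denominator, writing $\varphi(b)=d/(x^sy^k)$ with $s,k$ minimal (so that $x,y\nmid d$), and then applies Lemma~\ref{Bezout} three times to obtain four terms
\[
\varphi(b)=\lambda^kr_1+c_1+c_2+\lambda^{-s}r_2;
\]
afterwards it still needs a separate estimate showing $s+k+\ln|d|\preccurlyeq\ln\mu(b)$, which uses the minimality of $s,k$ via the existence of primes $p\mid x$ and $q\mid y$ not dividing $d$. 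Your route instead spends a preliminary conjugation by $t^{\pm L}$ to force the denominator of $b'$ to be a pure power of $x$, after which a \emph{single} B\'ezout split yields only two terms $a+c\lambda^{-e}$; the final bookkeeping is then immediate from $\mu(b)=|k|m\geq\max\{|k|,m\}$ and the elementary bound $L\leq\log_2 m$. Your version is tidier in this rank-one setting, while the paper's four-term decomposition, tracking $s$ and $k$ separately throughout, is set up to parallel the number-field generalisation in Theorem~\ref{lowerfin} and Lemma~\ref{tech6}, where exactly the same template is reused.
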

\begin{proof} Let $B_k^s=_\varphi\lambda^{-s}\Z+\lambda^{-s+1}\Z+\lambda\Z+\ldots+\lambda^{k-1}\Z+\lambda^k\Z$, then 
$$B=\bigcup_{0\leq s,k\in\Z}B_k^s.$$
For any $1\neq b\in B$ we may choose $0\leq s,k$  smallest possible so that $b\in B_k^s$. Then $\varphi(b)\in{1\over x^sy^k}\Z$ and using the classical version of Bezout's identity it is easy to see that $s,k$ are also the smallest possible non negative integers satisfying this.
So we have
$$\varphi(b)={d\over x^sy^k}$$
for $0\neq d\in\Z$ such that $x,y\nmid d$. Now take $d_1,d_2\in\Z$ such that 
$$d=x^sd_1+y^kd_2$$
and $|d_1|,|d_2|\leq|d|(|x|^s|y|^k),$ we may find them using  Lemma \ref{Bezout}. We have
$$b={d\over x^sy^k}={d_1\over y^k}+{d_2\over x^s}.$$
Using again Lemma \ref{Bezout}, we  find $r_1,c_1,r_2,c_2$ such that
$$d_1=x^kr_1+y^kc_1,$$
$$d_2=y^sr_2+x^sc_2$$
and $|r_1|,|c_1|\leq|d_1|(|x|^k|y|^k),$ $|r_2|,|c_2|\leq|d_2|(|x|^s|y|^s).$ Now
$$\varphi(b)={d_1\over y^k}+{d_2\over x^s}=\lambda^kr_1+c_1+c_2+\lambda^{-s}r_2.$$
Multiplicatively $b=(a_1^{r_1})^{t^k}a_1^{c_1}a_1^{c_2}(a_1^{r_2})^{t^{-s}}$ which yields
$$\|b\|_G\leq 2k+2s+\|a_1^{r_1}\|_G+\|a_1^{c_1}\|_G+\|a_1^{c_2}\|_G+\|a_1^{r_2}\|_G.$$
Consider for example $r_1$ and assume $r_1\neq 0$. By Proposition \ref{rank1A} 
$$\begin{aligned}\|a_1^{r_1}\|_G\preceq\ln(|r_1|)\preceq\ln|d_1|+k(\ln|x|+\ln|y|)\leq\\ \ln|d|+(s+k)\ln|x|+2k\ln|y|\leq\\
 (s+k)3(\ln|x|+\ln|y|)+\ln|d|\preceq\\
  s+k+\ln|d|.\end{aligned}$$
The same holds for $c_1,c_2,r_2$ so we get
$$\|b\|_G\preceq s+k+\ln|d|.$$
Now we let $l=\text{gcd}(x^sy^k,d)$. Obviously, $l\leq x^sy^k$  thus
$$s+k+\ln|d|=s+k+\ln|l|+\ln|d/l|\leq s+k+\ln(x^sy^k)+\ln|d/l|\preceq s+k+\ln|d/l|$$
Finally, as $x,y\nmid d$, there is some prime $p\mid x$ with $p\nmid d$ and some prime $q\mid y$ with $q\nmid d$. Then $p^sq^k\mid(x^sy^k/l)$ thus
$$s+k\preceq\ln|{x^sy^k\over l}|$$
and we get
$$s+k+\ln|d|\preceq s+k+\ln|{d\over l}|\preceq\ln|{x^sy^k\over l}|+\ln|{d\over l}|=\ln|{x^sy^kd\over l^2}|=\ln(\mu(b)).$$
\end{proof}

\subsection{The lower bound for elements in $A$} We proceed now to the proof of the upper bound in the case when the Pr\"ufer rank of $B$ is an arbitrary positive integer. We follow the same strategy as before: we check it first for elements in $A$, then for any $b\in B$.

\begin{proposition}\label{rankarbA} Let $G=\langle t\rangle \ltimes B$ finitely generated with a monomorphism $\varphi:B\hookrightarrow\Q^n$ and $t$ acting on $B$ via $\varphi$ and
a semisimple matrix with no eigenvalue of complex norm 1. Then for any $a\in A=\varphi^{-1}(\Z^n)$ we have
$$\|a\|_G\preccurlyeq\ln(\|a\|+1).$$
\end{proposition}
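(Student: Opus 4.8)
The plan is to reduce the asserted inequality to a purely combinatorial statement and then prove that statement using the hyperbolic dynamics of the action. By \eqref{hypo3} we may assume $G=\langle t\rangle\ltimes B$, $B\subseteq_\varphi\Q^n$, with $t$ acting through $\varphi$ by a semisimple matrix $M$ having no eigenvalue of complex norm $1$, and $A=\varphi^{-1}(\Z^n)$. The reduction: if we can write $\varphi(a)=\sum_{k=-s}^{s}M^k c_k$ with all $c_k\in\Z^n$ bounded (say $\|c_k\|\le D$) and $s\preccurlyeq\ln(\|a\|+1)$, then in $G$ the element $a$ equals the word $t^{-s}a^{(c_s)}\,t\,a^{(c_{s-1})}\,t\cdots t\,a^{(c_{-s})}\,t^{s}$ obtained by telescoping the intermediate $t$‑powers between consecutive conjugates, where $a^{(c)}\in A$ denotes the element with $\varphi(a^{(c)})=c$. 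Such a word has length at most $4s+(2s+1)D'$ for a constant $D'$, hence $\|a\|_G\preccurlyeq s\preccurlyeq\ln(\|a\|+1)$. So everything is in producing this bounded finite ``two‑sided $M$‑adic expansion''.

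To produce it I would first fix the relevant dynamical data. Since $M$ is semisimple with no eigenvalue of modulus $1$, write $\R^n=V_+\oplus V_-$ for the $M$‑invariant subspaces spanned by the eigenvectors with eigenvalue of modulus $>1$, resp.\ $<1$, and choose a norm equivalent to $\|\cdot\|$ together with $\rho\in(0,1)$ so that $\|M^{-1}v\|\le\rho\|v\|$ for $v\in V_+$ and $\|Mv\|\le\rho\|v\|$ for $v\in V_-$ (possible precisely because there are no Jordan blocks). I would also record the elementary fact that $\Z^n/(\Z^n\cap M^{\pm1}\Z^n)$ is finite, so there is $D_0$ with the property that for every $a\in\Z^n$ one can choose $c\in\Z^n$, $\|c\|\le D_0$, with $M(a-c)\in\Z^n$, and symmetrically with $M^{-1}$; this is the analogue of the bounded Bézout estimate of Lemma \ref{Bezout}.

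Next, build the expansion. Over $\R^n$ it is easy: decompose $\varphi(a)=a_++a_-$ along $V_+\oplus V_-$, and, since $M^{-1}$ contracts $V_+$ and $M$ contracts $V_-$, expand $a_+=\sum_{k\ge 0}M^k\gamma_k$ and $a_-=\sum_{k\ge 0}M^{-k}\delta_k$ with $\gamma_k,\delta_k\in\R^n$ of bounded norm and both expansions of length $\preccurlyeq\ln(\|a\|+1)$, so that $\varphi(a)=\sum_{|k|\le s}M^k r_k$ with $r_k$ real and bounded. Then transfer to $\Z^n$: round the digits $r_k$ to integers $c_k\in\Z^n$ one level at a time, each time redistributing the bounded rounding error $\epsilon_k=r_k-c_k$ without changing $\sum M^k r_k$ by moving its $V_+$‑component to the next not‑yet‑treated level on the $+\infty$ side (where it is further contracted by $M^{-1}$) and its $V_-$‑component to the next not‑yet‑treated level on the $-\infty$ side (contracted by $M$). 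Performing the levels in the zig‑zag order $0,-1,1,-2,2,\dots$ keeps both target levels available at every step, and the accumulated corrections at each level form convergent geometric series in $\rho$, so every digit stays uniformly bounded.

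The main obstacle is genuine and is the reason this is not just the rank‑one argument of Proposition \ref{rank1A} copied coordinatewise: $V_+$ and $V_-$ are in general not $\Q$‑rational (an eigenvalue of modulus $>1$ and one of modulus $<1$ can be Galois conjugate), so one cannot split $a$ itself inside $\Z^n$ into an ``expanding'' and a ``contracting'' part and treat them separately — any single operation $a\mapsto M^{\pm1}(a-c)$ contracts one of $V_\pm$ but expands the other. This forces the contractions in $V_+$ (via positive powers of $M$) and in $V_-$ (via negative powers of $M$) to be run simultaneously inside one bi‑infinite expansion, with rounding errors diffused in opposite directions, as above. The other point requiring care is making the bounded‑digit expansion genuinely \emph{finite} over $\Z^n$ rather than terminating in an infinite ``tail'' as a real base expansion would; here one uses that for fixed $s$ the group $\sum_{|k|\le s}M^k\Z^n$ is a lattice, together with a Bézout‑type control in the spirit of Lemma \ref{Bezout}. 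It is convenient for this last step to first reduce, via the $\Q[M]$‑module decomposition $\Q^n=\bigoplus_j W_j$ into simple summands (using semisimplicity), the triangle inequality $\|a\|_G\le\sum_j\|\pi_j(a)\|_{G}$ for the rational projections $\pi_j$, the sum formula \eqref{musum}, and the fact that $\mu_E$ coincides with $\|\cdot\|$ on integer vectors so that $\ln(\|a\|+1)\sim\sum_j\ln(\mu_E(\pi_j a)+1)$, to the case where $M$ is multiplication by an algebraic number $\theta$ on $\Q(\theta)$; then $B$ is a finitely generated $\Z[\theta^{\pm1}]$‑module, every element is manifestly a finite Laurent polynomial in $\theta$ applied to a bounded generating set, and the expanding and contracting archimedean places of $\theta$ supply exactly the two ``directions'' of the expansion.
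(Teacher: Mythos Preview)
Your high-level plan matches the paper's: produce a bounded two-sided $M$-adic expansion of $\varphi(a)$ from the hyperbolic splitting $\R^n=V_+\oplus V_-$ and read off a short word. The gap is in how the expansion is produced. Your scheme (build a real expansion, then round digits with zig-zag error propagation) has no termination mechanism: after processing all levels in $[-s,s]$, the residual pushed to level $s{+}1$ is a bounded vector in $V_+$, but its nearest integer point need not lie in $V_+$, so rounding typically produces a fresh $V_-$-component which must be pushed back to the negative side, and the process ping-pongs indefinitely. Nor can you simply stop and discard the tail, since the unprocessed contribution $M^{s+1}E_+$ is exponentially large. Your proposed escape via the $\Q(\theta)$ reduction does not close the gap either: that every element of $B$ is a finite Laurent polynomial in $\theta$ applied to a bounded generating set is tautological and gives no bound on the degree; and your reduction inequality invokes $\mu_E(\pi_j a)$ for the rational projections $\pi_j$, but $\pi_j a$ is in general not in $\Z^n$, so you would need the estimate for arbitrary $b\in B$ --- precisely what the paper establishes \emph{after} this proposition, using this proposition as input.

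The paper sidesteps all of this with one move: round the \emph{orbit}, not the digits. Fix $m$ with $mM$ and $mM^{-1}$ integer matrices, write $\varphi(a)=u+v$ along $V_+\oplus V_-$, and take $u_j\in m\Z^n$ a nearest lattice point to $M^{-j}u$. Then $r_j:=u_j-Mu_{j+1}$ lies in $\Z^n$ automatically (this is what the choice of $m$ buys), is uniformly bounded because $r_j=(u_j-M^{-j}u)-M(u_{j+1}-M^{-j-1}u)$, and $\sum_{j=0}^k M^jr_j$ telescopes \emph{exactly} to $u_0$, terminating at the first $k$ with $u_{k+1}=0$ --- which exists since $M^{-j}u\to 0$. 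Do the same for $v$ using $M^j v$ to obtain $v_0=\sum M^{-j}s_j$. The only residuals are the bounded $u-u_0$ and $v-v_0$; since $u_0,v_0,\varphi(a)\in\Z^n$ their sum is an integer vector, so it is simply absorbed into the constant digit $p=r_0+s_0+(u-u_0)+(v-v_0)\in\Z^n$. No error propagation, no infinite tail, and no number-field reduction is needed here.
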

\begin{proof} Let $M$ be the matrix encoding the action of $t$ on $B$. We let $M$ act on the vector space $\R^n$.
As $M$ is semisimple, we may split $\R^n=U\oplus V$ with $U$ and $V$ both $M$-closed and so that for any $u\in U$, $\|Mu\|>\|u\|$, whereas for any $v\in V$, $\|Mv\|<\|v\|$. To see it, we let $U=U_1\oplus\ldots\oplus U_m$ with each $U_i$ either a 1-dimensional eigenspace associated to a real eigenvalue $|\lambda_i|>1$ or a 
2-dimensional $M$-closed real space associated to a pair of complex eigenvalues $\lambda_i,\bar\lambda_i$ with $\lambda_i\bar\lambda_i>1$. We do the same for $V$ with those eigenvalues such that $\lambda_i\bar\lambda_i<1$. If $U_i$ is a subspace of the first kind associated to a real eigenvalue, then $\|M^{-1}u_i\|=|\lambda_i|^{-1}\|u_i\|$
for any $u_i\in U_i$. And if $U_i$ is associated to a pair of complexes eigenvalues $\lambda_i,\bar\lambda_i$ then for any $u_i\in U_i$ we may find complex vectors $v_i,\bar v_i$ with $Mv_i=\lambda_iv_i$ and $u_i=v_i+\bar v_i$. Then $M^{-k}u_i=\lambda_i^{-k}v_i+(\bar \lambda_i)^{-k}\bar v_i$ is just half of the real part of $\lambda^{-k}v_i$. This means that 
for $u\in U$
$$\text{lim}_{k\to\infty}\|M^{-k}u\|=0$$
and analogously one sees that for $v\in V$,
$$\text{lim}_{k\to\infty}\|M^{k}v\|=0.$$

We put
$$\varphi(a)=u+v$$
with $u\in U$, $v\in V$. We claim that there is a bound $K$ (not depending on $a$) such that for some $k,l\geq 0$ and $r_i,p,s_j\in\Z^n$ with $\|r_j\|,\|p\|,\|s_j\|\leq K$,
\begin{equation}\label{claim}\varphi(a)=M^kr_k+M^{k-1}r_{k-1}+\ldots+Mr_1+p+M^{-1}s_{1}+\ldots+M^{-l}s_{l}.\end{equation}
(In particular, there are only finitely many possible choices for $r_j,p,s_j$). 

There is an integer $m\in\Z$ such that both $mM$ and $mM^{-1}$ are integer matrices, we fix $m$ from now on. The set $m\Z^n$ is a lattice in $\R^n$.
For any $j\geq 0$ let $u_j\in m\Z^n$ be as close as possible (with respect to the norm $\|\cdot\|$) to $M^{-j}u$. We have $\|u_j-M^{-j}u\|\leq {mn/2}$. If $u_0=0$, let $k=0$, in other case take $k$ such that $u_{k+1}=0$, $u_k\neq 0$. 
  Let $r_j:=u_j-Mu_{j+1}$ for $0\leq j\leq k$, then $r_j\in\Z^n$ and
$$\begin{aligned}
u_0=-M^{k+1}u_{k+1}+M^ku_k-M^ku_k+M^{k-1}u_{k-1}-\ldots+Mu_1-Mu_1+u_0=&\\
=M^kr_k+M^{k-1}r_{k-1}+\ldots+Mr_1+r_0.&\\
\end{aligned}$$
Now
$$r_j=u_j-Mu_{j+1}=(u_j-M^{-j}u)-M(u_{j+1}-M^{-j-1}u)$$
 thus
 $$\|r_j\|\leq mn.$$
 Next, we repeat the process with $v$ but this time we choose elements $v_j\in m\Z$ that approximate $M^jv$. If $v_0=0$, put $l=0$, in other case we get
$$v_0=M^{-l}s_l+M^{-l+1}s_{l-1}+\ldots+M^{-1}s_1+s_0$$
with $\|s_j\|\leq mn$. Finally, note that by construction for
$w:=u-u_0,$ $w':=v-v_0$ we have $\|w\|,\|w'\|\leq nm/2$ and
$$\varphi(a)=u+v=u_0+v_0+w+w'.$$
Let $p=r_0+s_0+w+w'$. The equation above implies that $p\in\Z^n$ and since $\|p\|\leq 3mn$ we get the claim with $K=3nm.$

Now, we may use (\ref{claim}) to write a word in $G$ with multiplicative notation representing $a$. Every instance of $M^jr_j$ with $r_j=(r_{j,1},\ldots,r_{j,n})$ will be substituted by $t^{-j}a_1^{r_{j,1}}\ldots a_n^{r_{j,n}}t^j$ and the same for the $s_j$'s. This means that if we put
$$C=\text{max}\{\|\varphi^{-1}(c)\|_G\mid c\in\Z^n,\|c\|\leq K\}$$ 
(this is a finite set, so $C<\infty$), then we have
$$\|a\|_G\leq 2(k+l)+C(k+l+1)=(2+C)(k+l)+C\preceq k+l.$$

Next, we are going to prove that 
 \begin{equation}\label{boundk}k\preceq\ln(\|u\|+1).\end{equation} 
 Obviously, we may assume $k\neq 0$.  Let $\nu:=M^{-k}u$, then $u=M^k\nu$ and $\nu\in U$. Using the splitting $U=U_1\oplus\ldots\oplus U_m$ we get $\nu=\nu_1+\ldots+\nu_m$ with $\nu_i\in U_i$.
We observe that the closest element to $\nu$ in $m\Z^n$ is $u_k$ and that $u_k\neq 0$. This implies $\|\nu\|\geq m/2$.  Take $\lambda$ be the eigenvalue $\lambda_i$ of smallest complex modulus, (which we denote $|\cdot|$). Then
$${m\over 2}|\lambda|^k\leq|\lambda|^k\|\nu\|\preceq|\lambda|^k\sum_{i=1}^m\|\nu_i\|\leq\sum_{i=1}^m|\lambda_i|^k\|\nu_i\|$$

 For each $i=1,\ldots,m$, if $\lambda_i$ is a real eigenvalue, we have $\|M^k\nu_i\|=|\lambda_i|^k\|\nu_i\|$. Otherwise, we let $\|\cdot\|_2$ denote the Euclidean norm, then, choosing a suitable basis of $U_i$
 $$|\lambda_i|^k\|\nu_i\|\preceq|\lambda_i|^k\|\nu_i\|_2=\|M^k\nu_i\|_2\preceq\|M^k\nu_i\|.$$
  Therefore
 $${m\over 2}|\lambda|^k\leq\sum_{i=1}^m\|\lambda_i\|^k\|\nu_i\|=\sum_{i=1}^m\|M^k\nu_i\|\preccurlyeq\|M^k\nu\|=\|u\|$$
thus $k\preccurlyeq\ln(\|u\|+1)$.
 One can prove that $l\preccurlyeq\ln(\|v\|+1)$ by proceeding analogously using $M^{-1}$ and $v$, $V$ instead.

Finally all this implies the lower bound $$k+l\preceq \ln(\|u\|+1)+\ln(\|v\|+1)\sim\ln(\|u\|+\|v\|)\sim\ln(\|a\|+1).$$

\end{proof}

\goodbreak

\subsection{The lower bound for elements in $B$: reductions}\label{reductions} 

We will show now that we may reduce the problem of bounding above  word lengths for elements in $b$ to a special situation. Recall that we have
$G=\langle t\rangle \ltimes B$ finitely generated with  $\varphi:B\to\Q^n$ injective so that $t$ acts semi-simply on $B$ and $A\leq B$ free abelian of rank $n$.

\subsubsection{We may assume that $B\otimes\Q$ is simple as $\Q\langle t\rangle$-module.}
The fact that $t$ acts semi-simply implies that $B\otimes\Q$ can be split as a sum of irreducible $\Q\langle t\rangle$-modules. If we had $B\otimes\Q=W_1\oplus W_2$ with $W_1,W_2\neq 0$ then we could choose $A_1\subseteq W_1$ and $A_2\subseteq W_2$ subabelian groups of the maximal possible rank and so that $A\subseteq A_1\oplus A_2$. Let $B_i=\Z\langle t\rangle A_i$ for $i=1,2$. We have $B_i\subseteq W_i$ and $B\subseteq B_1\oplus B_2$. 
Set $\tilde G=\langle t\rangle\ltimes (B_1\oplus B_2)$, obviously $G\leq\tilde G$. Moreover as both a free abelian groups of the same (finite) rank, the index of $A$ in $A_1\oplus A_2$ is finite. Then one easily checks that the index of $B$ in $B_1\oplus B_2$ and therefore also the index of $G$ in $\tilde G$ is finite.

This means that we may assume that $G=\tilde G$, equivalently that
 $B=B_1\oplus B_2$ with $B_1$ and $B_2$ $t$-invariant.  We put $G_1=\langle t \rangle\ltimes B_1$ and $G_2=\langle t \rangle\ltimes B_2$. For $b\in B$, let $b_1\in B_1$ and $b_2\in B_2$ be the elements with $b=b_1+b_2$. Then 
$$\|b\|_G\preceq\|b_1\|_{G_1}+\|b_2\|_{G_2}$$
and
$$\ln(\mu_E(b_1)+1)+\ln(\mu_E(b_2)+1)\sim\ln(\mu_E(b)+1).$$
Therefore if we had 
$$\|b_i\|_{G_i}\preceq\ln(\mu_E(b_i)+1)$$
for $i=1,2$, the we would deduce the same for $G$ and $b$.

\subsubsection{We may assume $B=\OO_L[\lambda^{\pm 1}]$ and $A=\OO_L$ where $L:\Q$ is a finite field extension and $\lambda\in L$ ($\OO_L$ is the ring of integers in $L$) and that $t$ acts on $B$ by multiplication with $\lambda$.}
At this point we have $G=\langle t\rangle\ltimes B$ with $B\otimes\Q$ irreducible as  $\Q\langle t\rangle$-module. Therefore for any $z\in B\otimes\Q$,  $B\otimes\Q=\Q\langle t\rangle z$ and the annihilator $I$ of $z$ in $\Q\langle t\rangle$ is generated by an irreducible monic polynomial $p(x)\in\Q[x]$ which is precisely the minimal polynomial of the matrix $M$ representing the action of $t$ on $B$. We choose $z=a_1\otimes{1/m}$ for $a_1$ one of the generators of $A$ and a suitable $m\in\Z$ that we determine below.
 Let $\lambda$ be a root of $p(t)$ and  $L=\Q(\lambda)\cong\Q\langle t\rangle/I$. Then $\Q\subseteq L$ is a finite field extension and we have an isomorphism
$B\otimes\Q\buildrel\cong\over\rightarrow L$ mapping $z$ to $1\in L$ and $z^t$ to $\lambda$. So we have an embedding of $B$ into $L$ so that $a_1$ is mapped to $m$. For each of the other generators $a_i$ with $i=2,\ldots,n$ there is some polynomial  $q_i(t)\in\Z\langle t\rangle$ and some integer $n_i$ with $a_i^{n_i}=a_1^{q_i(t)}$, so if we take as $m$ the smallest common multiple of all those $n_i$'s we deduce that $a_i$ is mapped to $\tilde q_i(\lambda)$ for some integer polynomial $\tilde q_i(t)$, in other words, we have that the image of $A$ lies inside $\OO_L$. We also use $A$ and $B$ to denote its images in $L$.
 We obviously have $B=A[\lambda^{\pm1}]\subseteq \hat B=\OO_L[\lambda^{\pm1}]$. As both $A\subseteq\OO_L$ are free abelian subgroups of the same (finite) rank in $L$, the index of $A$ in $\OO_L$ is finite. From this one deduces that also $B$ has finite index in $\hat B$ and the same happens for $G$  in $\hat G=\langle t\rangle\ltimes\hat B$. This means that we may assume $G=\hat G$. Note that we may also assume that $A=\OO_L$ and that the basis $\{a_1,\ldots,a_n\}$ corresponds to a $\Z$-basis of $\OO_L$, in particular that $a_1$ corresponds to 1.

\subsubsection{We may assume that $\lambda=x/y$ with $x,y\in\OO_L$ such that $\OO_L=x\OO_L+y\OO_L$.}
As $\lambda\in L$ which is the field of fractions of $\OO_L$, we deduce that $\lambda=x_1/y_1$ for some $x_1,y_1\in\OO_L$.
 By Lemma \ref{tech2} below, if $r$ is the ideal class number of $\OO_L$, then $\lambda^r=x/y$ for some $x,y\in\OO_L$ such that $x\OO_L+y\OO_L=\OO_L$. Finally we observe that the group $\tilde G=\langle t^r\rangle\rtimes B$ has finite index in $G$.

 \subsection{The lower bound for elements in $B$: final proof}

\begin{theorem}\label{lowerfin} Let $L:\Q$ be a finite field extension, $\lambda=x/y\in L$ with $x,y\in\OO_L$ such that $\OO_L=x\OO_L+y\OO_L$ and $G=\langle t\rangle \ltimes B$ with $B=\OO_L[\lambda^{\pm1}]$ and $t$ acting on $B$ by multiplication with $\lambda$. Then for any $b\in B$,
$$\|b\|_G\preceq\ln(\mu_E(b)+1).$$
\end{theorem}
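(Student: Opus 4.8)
The strategy will mirror the rank-1 case (Proposition \ref{rank1B}), replacing the integers $\Z$ by the Dedekind ring $\OO_L$ and ordinary gcd's by ideal-theoretic arguments, and then importing the already-established estimate for elements of $A=\OO_L$ (Proposition \ref{rankarbA}). First I would filter $B=\OO_L[\lambda^{\pm1}]$ by the $\OO_L$-submodules $B_k^s=_\varphi y^{-k}x^{-s}\OO_L+\ldots$ (i.e. the $\OO_L$-span of $\lambda^{-s},\ldots,\lambda^k$); since $x\OO_L+y\OO_L=\OO_L$ these are just $(x^sy^k)^{-1}\OO_L$, and $B=\bigcup_{s,k\geq0}B_k^s$. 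For a given $1\neq b\in B$, choose $s,k\geq0$ minimal with $b\in B_k^s$, so $\varphi(b)=d/(x^sy^k)$ with $d\in\OO_L$ and $d$ not divisible (as an element, up to units/ideals) by any prime factor of $x$ or of $y$ — this minimality is where I would need a small lemma saying the ideals $(x)$ and $(y)$ have the expected behaviour, using coprimality of $(x)$ and $(y)$.

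Next I would split $\varphi(b)=d/(x^sy^k)$ into a "positive part" and a "negative part" using a bounded Bézout identity over $\OO_L$. Since $(x^s)+(y^k)=\OO_L$ (again by coprimality of $(x),(y)$), there exist $d_1,d_2\in\OO_L$ with $d=x^sd_1+y^kd_2$; the analogue of Lemma \ref{Bezout} would give a bound $\|d_1\|,\|d_2\|\preceq\|d\|\cdot(\text{const})^{s+k}$ in the chosen $\Z$-basis of $\OO_L$ — here one needs an effective/bounded Bézout over $\OO_L$, which follows by reducing mod the lattice $x^s\OO_L$ (finite index $|N(x)|^s$ in $\OO_L$) exactly as in the rank-1 proof. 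This yields $\varphi(b)=d_1/y^k+d_2/x^s$. Applying the same device once more to $d_1$ (splitting via $(x^k)+(y^k)=\OO_L$) and to $d_2$ (via $(y^s)+(x^s)=\OO_L$), I get
$$\varphi(b)=\lambda^k r_1+c_1+c_2+\lambda^{-s}r_2$$
with $r_1,c_1,c_2,r_2\in A=\OO_L$ and $\|r_1\|,\|c_1\|,\|c_2\|,\|r_2\|\preceq\|d\|\cdot(\text{const})^{s+k}$. Multiplicatively $b=(\text{elt of }A)^{t^k}\cdot(\text{elt of }A)\cdot(\text{elt of }A)\cdot(\text{elt of }A)^{t^{-s}}$, so
$$\|b\|_G\preceq k+s+\|r_1\|_G+\|c_1\|_G+\|c_2\|_G+\|r_2\|_G.$$

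Now I would bound each $\|r_i\|_G$ by Proposition \ref{rankarbA} (the estimate for elements of $A$): $\|r_1\|_G\preceq\ln(\|r_1\|+1)\preceq\ln\|d\|+(s+k)\cdot\text{const}$, and likewise for the others; using Lemma \ref{propln} to absorb the linear-in-$(s+k)$ terms, this gives $\|b\|_G\preceq s+k+\ln(\|d\|+1)$. The final and most delicate step is to show $s+k+\ln(\|d\|+1)\preceq\ln(\mu_E(b)+1)$. For this I would take $l$ to be (a generator of, or a measure of the size of) the largest factor common to $(x^sy^k)$ and $(d)$: since $\|d\|$ is comparable to $|N(d)|^{1/[L:\Q]}$ up to bounded factors, and $\mu_E(b)$ is built from $|N(d)|\cdot|N(x)|^s|N(y)|^k$ divided by the square of the gcd of the relevant integers, I would argue $\ln(\|d\|+1)\preceq\ln|N(d)|\preceq\ln|N(d)/l|+\ln l$ and then use that $l\leq|N(x)|^s|N(y)|^k$ and that minimality of $s,k$ forces a prime of $(x)$ and a prime of $(y)$ to survive in $(x^sy^k)/l$, giving $s+k\preceq\ln|N(x^sy^k)/l|$. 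Combining, $s+k+\ln|N(d)|\preceq\ln\big(|N(x^sy^k)|\,|N(d)|/l^2\big)$, which is $\sim\ln(\mu_E(b)+1)$ by the definition of $\mu$ together with Lemma \ref{inv}.

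The main obstacle I expect is the bookkeeping in this last paragraph: translating the ideal-class/norm statements ($l$ divides $N(x^sy^k)$ and $N(d)$, primes survive) into the precise integer quantities appearing inside $\mu$, and verifying that passing between $\|d\|$, $|N(d)|$ and the gcd's only costs bounded multiplicative and additive constants (so that the $\preceq$ relations are not destroyed). The bounded Bézout over $\OO_L$ is routine once phrased as a statement about representatives modulo a finite-index sublattice, and the reductions in Subsection \ref{reductions} have already done the work of putting us in the clean situation $B=\OO_L[\lambda^{\pm1}]$, $(x)+(y)=\OO_L$.
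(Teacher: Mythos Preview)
Your outline through the decomposition $b=\lambda^k r_1+c_1+c_2+\lambda^{-s}r_2$ and the appeal to Proposition~\ref{rankarbA} matches the paper's proof essentially verbatim; the bounded B\'ezout over $\OO_L$ is exactly Lemma~\ref{tech5}, and you arrive correctly at $\|b\|_G\preceq s+k+\ln(\|d\|+1)$.

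The gap is in your final paragraph. Two of the assertions there are false and cannot be repaired as stated. First, $\|d\|$ is \emph{not} comparable to $|N(d)|^{1/[L:\Q]}$: as soon as $\OO_L^\times$ is infinite (any $L\neq\Q$ or imaginary quadratic) there are $d\in\OO_L$ with $|N(d)|=1$ and $\|d\|$ arbitrarily large. Second, $\mu_E(b)$ is a coordinate-wise quantity $\sum_i\mu(\beta_i)$ computed from the \emph{rational} coordinates of $b$; it is not built from $|N(d)|\cdot|N(x)|^s|N(y)|^k$ or any ideal-theoretic gcd. Since $x^sy^k\notin\Z$, writing $b=d/(x^sy^k)$ does not even give rational coordinates $\beta_i$ with a common integer denominator, so there is no direct route from $d$ and an ``ideal $l$'' to $\mu_E(b)$.

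The paper's Lemma~\ref{tech6} solves this by first replacing $d$ with $c:=d\cdot u^sv^k$ where $u=N(x)/x$, $v=N(y)/y\in\OO_L$, so that $b=c/(N(x)^sN(y)^k)$ now has an honest \emph{rational integer} denominator and the coordinates $\gamma_i$ of $c$ satisfy $\beta_i=\gamma_i/(N(x)^sN(y)^k)$. The passage $d\mapsto c$ costs only a factor $(\text{const})^{s+k}$ in $\|\cdot\|$, which is harmless. The delicate point---and the real content you are missing---is then to show that the minimality condition $y\nmid d$ forces, for some coordinate $\gamma_1$ and some rational prime $p$, that $p^{\epsilon k}$ survives in $N(x)^sN(y)^k/\gcd(\gamma_1,N(x)^sN(y)^k)$. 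This is done by tracking valuations of $c$ at the primes above $p$ in the Galois closure of $L$ (not just in $\OO_L$), and is where most of the work in Lemma~\ref{tech6} lies; once established, Lemma~\ref{bound} converts it into the desired $\max\{s,k\}+\ln(\|d\|+1)\preceq\ln(\mu_E(b)+1)$.
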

\begin{proof}  We let $A=\OO_L$, $1=a_1,\ldots,a_n$ an integer basis of $\OO_L$ and observe that out group is generated by $\mathcal{X}=\{t^{\pm1},a_1^{\pm},\ldots,a_n^{\pm}\}$. We follow the same proof that we have already seen in Proposition \ref{rank1B}. We assume $b\neq 0$. As $b\in B$, there are some $0\leq s,k\in\Z$ so that
$$b\in{1\over x^sy^k}\OO_L$$
thus $d:=bx^sy^k\in\OO_L$ and we may take $s$ and $k$ each smallest possible satisfying that. Therefore  $x,y\nmid d$. 

We use Lemma \ref{tech5} to find $d_1,d_2\in\OO_L$ such that for certain $K$ depending on $x$ and $y$ only,
$$d=x^sd_1+y^kd_2,$$
\begin{equation}\label{normd1d2}\|d_1\|,\|d_2\|\leq \|d\|K^{s+k}.\end{equation}
 Then
$$b={d\over x^sy^k}={d_1\over y^k}+{d_2\over x^s}.$$
We use again Lemma \ref{tech5} to find $r_1,c_1,r_2,c_2\in\OO_L$ such that
$$d_1=x^kr_1+y^kc_1,$$
$$d_2=y^sr_2+x^sc_2$$
such that (here we are taking (\ref{normd1d2}) into account)
$$\label{norma1c1}\begin{aligned}
\|r_1\|,\|c_1\|\leq\|d_1\|K^{2k}\leq\|d\|K^{s+3k}\leq\|d\|(K^3)^{s+k}\\
\|r_2\|,\|c_2\|\leq\|d_2\|K^{2s}\leq\|d\|K^{3s+k}\leq\|d\|(K^3)^{s+k}.\end{aligned}$$
thus 
$$\ln(\|r_1\|+1),\ln(\|c_1\|+1),\ln(\|r_2\|+1),\ln(\|c_2\|+1)\preceq s+k+\ln(\|d\|+1).$$
 We have
$$b={d_1\over y^k}+{d_2\over x^s}=\lambda^kr_1+c_1+c_2+\lambda^{-s}r_2$$
which written multiplicatively is
$$(a_1^{r_1})^{t^k}a_1^{c_1}a_1^{c_2}(a_1^{r_2})^{t^{-s}}$$
so using the bound for elements in $A$ that we have seen in Proposition \ref{rankarbA} we get
$$\begin{aligned}\|b\|_G\leq 2k+2s+\|r_1\|_G+\|c_1\|_G+\|c_2\|_G+\|r_2\|_G\\
\preceq s+k+\ln(\|r_1\|+1)+\ln(\|c_1\|+1)+\ln(\|c_2\|+1)+\ln(\|r_2\|+1)\\
 \preceq s+k+\ln\|d\|\preceq\text{max}\{s,k\}+\ln(\|d\|+1).\end{aligned}$$
 Finally, we note that  the choice of $s,k$ implies that  the hypothesis of
 Lemma \ref{tech6} hold true thus 
$$\text{max}\{s,k\}+\ln\|d\|\preceq\ln(\mu(b)_E+1).$$

\end{proof}

\subsection{Technical Lemmas}
In this section we fix the following notation: $L:\Q$ is a field extension of degree $n$ and $\mathcal{O}_L$ is the ring of integers in $L$.  We fix also an integer basis 
 $\{1=e_1,\ldots,e_n\}$ of $L$ over $\Q$. Associated to this integer basis we have an isomorphism $\pi:L\to \Q^n$ mapping $\OO_L$ onto  $\Z^n$.  The ring $\OO_L$ is a Dedekind domain so although we can not use ordinary factorization as we did when we dealt with the case of Pr\"ufer rank 1, we may use factorization of ideals in $\OO_L$.  We may also speak of fractionary ideals of $L$ and use the ordinary divisibility notation when talking about them.
We denote by $r$ its ideal class number of $\OO_L$, recall that this means that for any ideal $I$ of $\OO_L$, $I^r$ is a principal ideal.

\begin{lemma}\label{tech1} Let $I,J$ be ideals of $\mathcal{O}_L$ such that $I+J=\mathcal{O}_L$. Then for any $s,k\geq 1$,
$I^k+J^s=\mathcal{O}_L$.
\end{lemma}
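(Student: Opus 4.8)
The statement to prove is Lemma~\ref{tech1}: if $I+J=\OO_L$ then $I^k+J^s=\OO_L$ for all $s,k\geq 1$. This is a standard fact about comaximal ideals in a commutative ring, and I will prove it purely algebraically without using anything specific to Dedekind domains.

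\medskip

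The plan is as follows. First I would reduce to the case $k=1$ (or handle both exponents symmetrically in one induction). The cleanest route is to prove by induction on $k$ that $I^k+J=\OO_L$ whenever $I+J=\OO_L$. For the base case $k=1$ there is nothing to do. For the inductive step, suppose $I^k+J=\OO_L$; writing $1=a+b$ with $a\in I^k$ and $b\in J$, and $1=a'+b'$ with $a'\in I$, $b'\in J$, I would multiply these two expressions: $1=(a+b)(a'+b')=aa'+ab'+ba'+bb'$. The term $aa'$ lies in $I^{k+1}$, while the remaining three terms each contain a factor from $J$, hence lie in $J$. Therefore $1\in I^{k+1}+J$, which gives $I^{k+1}+J=\OO_L$. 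This proves $I^k+J=\OO_L$ for all $k\geq 1$.

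\medskip

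Next I would apply the same argument a second time, now with the roles reversed. Having established $I^k+J=\OO_L$, I apply the induction again (treating $I^k$ as the ``$I$'' and $J$ as the ``$J$'') to conclude $I^k+J^s=\OO_L$ for all $s\geq 1$. Alternatively, one can just invoke the statement already proved with the two ideals swapped: from $I^k+J=\OO_L$ we get $J^s+I^k=\OO_L$. Either way the proof is complete.

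\medskip

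There is really no main obstacle here — the only mild subtlety is making sure the bookkeeping of which terms land in which ideal is stated cleanly, and being explicit that $I^{k}\cdot I = I^{k+1}$ and that a product with any factor in $J$ lies in $J$ because $J$ is an ideal. I would also remark (or simply not belabor) that this holds in any commutative ring with $1$, so the Dedekind hypothesis is not used; it is recorded here because subsequent lemmas will combine it with the class-number machinery.

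\begin{proof}
We first show, by induction on $k\geq 1$, that $I^k+J=\OO_L$. The case $k=1$ is the hypothesis. Assume $I^k+J=\OO_L$, and write $1=a+b$ with $a\in I^k$, $b\in J$, and also $1=a'+b'$ with $a'\in I$, $b'\in J$. Multiplying,
$$1=(a+b)(a'+b')=aa'+(ab'+ba'+bb').$$
Here $aa'\in I^kI=I^{k+1}$, while $ab'$, $ba'$ and $bb'$ each have a factor lying in the ideal $J$, so $ab'+ba'+bb'\in J$. Hence $1\in I^{k+1}+J$, i.e.\ $I^{k+1}+J=\OO_L$. This proves $I^k+J=\OO_L$ for every $k\geq 1$.

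Now fix $k\geq 1$; we have just shown $I^k+J=\OO_L$. Applying the same statement with the ideals $J$ and $I^k$ in place of $I$ and $J$ respectively, we obtain $J^s+I^k=\OO_L$ for every $s\geq 1$, that is, $I^k+J^s=\OO_L$.
\end{proof}
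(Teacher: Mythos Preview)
Your proof is correct and follows essentially the same inductive idea as the paper's: both show comaximality survives taking powers by writing $1$ as a sum and absorbing the cross terms into the appropriate ideal. The paper inducts on $k$ with $s$ fixed (expanding $\OO_L=I^{k-1}(I+J)+J^s$ and then checking $I^{k-1}J\subseteq I^k+J^s$), while you first treat $s=1$ and then swap roles---a cosmetic difference only.
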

\begin{proof} If $k\geq 2$, assume by induction that $I^{k-1}+J^{s}=\mathcal{O}_L$. Then
$$\mathcal{O}_L=I^{k-1}(I+J)+J^{s}=I^k+I^{k-1}J+J^s.$$
Now, $I^{k-1}J=I^{k-1}J(I^{k-1}+J^{s})\subseteq I^k+J^s$.
\end{proof}

\begin{lemma}\label{tech2} Let $\lambda\in L$. Then $\lambda^r={x/ y}$ for some $x,y\in\mathcal{O}_L$ such that
$x\mathcal{O}_L+y\OO_L=\OO_L.$
\end{lemma}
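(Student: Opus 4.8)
The plan is to pass to the fractional ideal generated by $\lambda$, put it in lowest terms, and then exploit the fact that $r$-th powers of ideals in $\OO_L$ are principal. We may assume $\lambda\neq 0$, since for $\lambda=0$ we can take $x=0$, $y=1$. Write $\lambda=x_1/y_1$ with $x_1,y_1\in\OO_L$ and $y_1\neq 0$. Using unique factorization of nonzero ideals of the Dedekind domain $\OO_L$ into prime ideals, I would cancel the common prime factors of the integral ideals $(x_1)$ and $(y_1)$; this produces coprime integral ideals $I,J$ (so $I+J=\OO_L$) together with a common ideal $C$ such that $(x_1)=IC$ and $(y_1)=JC$. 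Hence the fractional ideal generated by $\lambda$ satisfies $(\lambda)=(x_1)(y_1)^{-1}=IJ^{-1}$.

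Next I would raise to the $r$-th power: $(\lambda^r)=(\lambda)^r=I^rJ^{-r}$. By definition of the ideal class number $r$, the ideals $I^r$ and $J^r$ are both principal, say $I^r=(x)$ and $J^r=(y)$ with $x,y\in\OO_L$ (and $x\neq 0$ as $I\neq 0$). Then $(\lambda^r)=(x)(y)^{-1}=(x/y)$ as fractional ideals of $L$, so $\lambda^r$ and $x/y$ generate the same fractional ideal and therefore differ by a unit $u\in\OO_L^{\times}$, i.e. $\lambda^r=u\,x/y$. Replacing $x$ by $ux$ (which still satisfies $(x)=I^r$) gives $\lambda^r=x/y$ exactly, as required.

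It then remains to verify $x\OO_L+y\OO_L=\OO_L$, that is $(x)+(y)=\OO_L$, equivalently $I^r+J^r=\OO_L$. But this is precisely Lemma \ref{tech1} applied to the coprime ideals $I$ and $J$ with $s=k=r$. The only step that needs a little care is the lowest-terms factorization in the first paragraph, but it is a routine consequence of unique factorization of ideals in $\OO_L$; everything else is bookkeeping with principal ideals and units, so I do not expect any real obstacle.
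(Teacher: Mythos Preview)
Your proof is correct and follows essentially the same route as the paper: reduce the fractional ideal $(\lambda)$ to a quotient $IJ^{-1}$ of coprime integral ideals, use the class number to make $I^r$ and $J^r$ principal, absorb the resulting unit, and invoke Lemma~\ref{tech1} for the coprimality of the $r$-th powers. The only cosmetic differences are notation (the paper takes the gcd ideal rather than your common factor $C$) and your explicit handling of $\lambda=0$.
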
 
\begin{proof} Put $\lambda={x_1/y_1}$ with $x_1,y_1\in\OO_L$ and let $I=x_1\OO_L+y_1\OO_L$ so that $I$ is the greatest common divisor of the ideals $x_1\OO_L$ and $y_1\OO_L$. Therefore $I\mid x_1\OO_L,y_1\OO_L$ so  $J=x_1\OO_L/I$ and$T=y_1\OO_L/I$ are ordinary coprime ideals in $\OO_L$ thus $\OO_L=J+T$.
 Looking at the fractionary ideal $\lambda\OO_L$ we have
 $$\lambda\OO_L=(x_1\OO_L)/(y_1\OO_L)=J/T.$$ Moreover there are $x',y\in\OO_L$ such that $J^r=x'\OO_L$ and $T^r=y\OO_L$. Thus $\lambda^r\OO_L=J^r/T^r={x'\over y}\OO_L$ and $\lambda^r={x'\over y}u$ for some unit $u\in\OO_L$. Set $x=x'u$. Then $\lambda^r={x/y}$. As $J+T=\OO_L$, by Lemma \ref{tech1} we have 
$$x\OO_L+y\OO_L=J^r+T^r=\OO_L.$$
\end{proof}

\begin{lemma}\label{tech3} There is a constant $C_1\geq 1$ such that for any $u,v\in L$,
$$\|uv\|\leq C_1\|u\|\cdot\|v\|.$$ 
\end{lemma}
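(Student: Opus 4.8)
The plan is to exploit that multiplication $L\times L\to L$ is a $\Q$-bilinear map between finite-dimensional $\Q$-vector spaces, hence automatically bounded once norms are fixed on source and target; the only real content is to make the constant explicit and, crucially, uniform in $u$ and $v$. So nothing deep is needed here — it reduces to counting structure constants.

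First I would write $u=\sum_{i=1}^n a_ie_i$ and $v=\sum_{j=1}^n b_je_j$ with $a_i,b_j\in\Q$, so that by the definition of $\pi$ and of $\|\cdot\|$ on $L$ we have $\|u\|=\sum_{i=1}^n|a_i|$ and $\|v\|=\sum_{j=1}^n|b_j|$. Since $\{1=e_1,\ldots,e_n\}$ is an integer basis, each product $e_ie_j$ lies in $\OO_L=\Z e_1+\cdots+\Z e_n$, so we may expand $e_ie_j=\sum_{k=1}^n c_{ijk}e_k$ with structure constants $c_{ijk}\in\Z$. I would then set $C_0:=\max\{|c_{ijk}|\mid 1\leq i,j,k\leq n\}$, a finite quantity depending only on $L$ and the fixed basis.

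Next, by bilinearity,
$$uv=\sum_{i,j}a_ib_je_ie_j=\sum_{k=1}^n\Big(\sum_{i,j}a_ib_jc_{ijk}\Big)e_k,$$
so the $k$-th coordinate of $\pi(uv)$ has absolute value at most $C_0\sum_{i,j}|a_i|\,|b_j|=C_0\|u\|\,\|v\|$. Summing over the $n$ coordinates gives
$$\|uv\|=\sum_{k=1}^n\Big|\sum_{i,j}a_ib_jc_{ijk}\Big|\leq nC_0\|u\|\,\|v\|,$$
and taking $C_1:=\max\{1,nC_0\}$ yields the claim.

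I do not expect any genuine obstacle. The one point worth emphasizing is that the bound is uniform in $u$ and $v$ precisely because there are only finitely many structure constants $c_{ijk}$, all independent of $u$ and $v$; everything else is the routine estimate above.
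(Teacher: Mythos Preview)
Your proof is correct and is essentially the same argument as the paper's: both exploit the $\Q$-bilinearity of multiplication and bound via the finitely many structure constants $c_{ijk}=\pi_k(e_ie_j)\in\Z$. The paper packages these constants into matrices $D_l=(c_{ijl})_{i,j}$ and uses operator norms, arriving at $C_1=\sum_l\|D_l^t\|$, while you take the blunt uniform bound $C_0=\max|c_{ijk}|$ and get $C_1=nC_0$; the underlying idea is identical.
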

\begin{proof} Recall that we had an integer basis $\{e_1,\ldots,e_n\}$ and an associated isomorphism $\pi:L\to\Q^n$. Here we see the elements of $\Q^n$ as column vectors. Let $\pi_l:L\to\Q$ be the composition of $\pi$ with the projection onto the $l$-th coordinate. Let
$D$ be the $\OO_L$-matrix with $e_ie_j$ in the entry $(i,j)$ and for $l=1,\ldots,n$ let 
$$D_l=(\pi_l(e_ie_j))$$
(it is a rational $n\times n$-matrix). For any $u,v\in L$ we have
$$uv=\pi(u)^tD\pi(v)$$
where the overscript $t$ means transpose. If we set 
$w_l=\pi(u)^tD_l,$ then
$$|\pi_l(uv)|=|\pi(u)^tD_l\pi(v)|=|<w_l,\pi(v)>|\leq \|w_l\|\|v\|\leq\|u\|\|D_l^t\|\|v\|.$$

Therefore
$$\|uv\|=\sum_{i=1}^n|\pi_l(uv)|\leq\|u\|\|v\|\sum_{i=1}^n\|D_l^t\|$$ 
and the claim follows with
$$C_1=\sum_{l=1}^n\|D_l^t\|.$$
\end{proof}

\begin{lemma}\label{tech5}  Let $x,y\in\OO_L$ with $x\OO_L+y\OO_L=\OO_L$. There is a constant $K$ depending on $x,y$ such that for any $d\in\OO_L$ and any $s,k\geq 0$ there are $a,c\in\OO_L$ with $d=ax^s+cy^k$ and
$$\|a\|,\|c\|\leq \|d\|K^{s+k}.$$
\end{lemma}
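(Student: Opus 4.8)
The plan is to solve the congruence $ax^s\equiv d\pmod{y^k\OO_L}$ by taking $a$ to be a \emph{bounded} representative of its residue class and then letting $c=(d-ax^s)y^{-k}$ be the forced quotient; the constants will then come out exponential in $s+k$ as wanted. First I would dispose of the degenerate cases: if $d=0$ take $a=c=0$, and if $s=0$ or $k=0$ put all of $d$ into the corresponding coefficient. So from now on assume $d\neq0$ — hence $\|d\|\geq1$, since via $\pi\colon L\to\Q^n$ the vector $\pi(d)\in\Z^n$ is nonzero with integer entries — and $s,k\geq1$.

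The key step is an estimate on bounded representatives of $\OO_L/y^k\OO_L$. Fix a finite transversal $R\subseteq\OO_L$ of $\OO_L/y\OO_L$ and set $\rho=\max_{r\in R}\|r\|$. Since $\OO_L$ is a domain and $y\neq0$, multiplication by $y^i$ induces an isomorphism $\OO_L/y\OO_L\cong y^i\OO_L/y^{i+1}\OO_L$ for every $i\geq0$; iterating down the filtration $\OO_L\supseteq y\OO_L\supseteq\cdots\supseteq y^k\OO_L$ shows that every class of $\OO_L/y^k\OO_L$ has a representative of the form $\sum_{i=0}^{k-1}y^ir_i$ with $r_i\in R$. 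Applying Lemma~\ref{tech3} repeatedly gives $\|y^ir_i\|\leq\rho(C_1\|y\|)^i$, so summing the geometric series such a representative has norm at most $K_2^{\,k}$ for a constant $K_2=K_2(y,L)\geq1$.

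Now I would finish as follows. By Lemma~\ref{tech1}, $x^s\OO_L+y^k\OO_L=\OO_L$, so $x^s$ is a unit modulo $y^k\OO_L$ and the congruence $ax^s\equiv d\pmod{y^k\OO_L}$ is solvable; by the previous paragraph I can choose a solution $a\in\OO_L$ with $\|a\|\leq K_2^{\,k}\leq\|d\|K_2^{\,s+k}$. Then $c:=(d-ax^s)y^{-k}$ lies in $\OO_L$ and satisfies $d=ax^s+cy^k$. To bound $\|c\|$, note that via $\pi$ multiplication by $y^{-1}$ on $L$ is a fixed rational matrix $N$, so multiplication by $y^{-k}$ is $N^k$ and $\|y^{-k}w\|\leq\|N\|^{\,k}\|w\|$ for all $w\in y^k\OO_L$. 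Combining this with Lemma~\ref{tech3} and $\|d\|\geq1$,
$$\|c\|\leq\|N\|^{\,k}\big(\|d\|+C_1^{\,s}\|x\|^{\,s}\|a\|\big)\leq\|N\|^{\,k}\|d\|\big(1+C_1^{\,s}\|x\|^{\,s}K_2^{\,k}\big),$$
and both this bound and the bound for $\|a\|$ are at most $\|d\|K^{\,s+k}$ once $K=K(x,y,L)$ is taken large enough.

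The only point requiring care is the bookkeeping of these exponential constants into a single base $K$ — absorbing the stray factor $k$ from the geometric sum and the factor $C_1^{\,s}\|x\|^{\,s}$. Everything else is routine once one has the filtration isomorphism $y^i\OO_L/y^{i+1}\OO_L\cong\OO_L/y\OO_L$, which is exactly what keeps the norm of the bounded representative exponential, and not worse, in $k$; this filtration is the natural replacement here for the single Euclidean division step that did the job in the rank-one Lemma~\ref{Bezout}.
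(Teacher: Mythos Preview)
Your argument is correct and takes a genuinely different route from the paper. The paper follows the template of the rank-one bounded Bezout (Lemma~\ref{Bezout}) more literally: it first picks arbitrary $r_1,c_1\in\OO_L$ with $1=r_1x^s+c_1y^k$, then corrects them by choosing $w\in\OO_L$ nearest (for $\|\cdot\|$) to the auxiliary point $z=\tfrac{1}{2x^sy^k}-\tfrac{r_1}{y^k}\in L$, and sets $a=d(r_1+y^kw)$, $c=d(c_1-x^sw)$; the bounds follow from $\|z-w\|\le n/2$ together with Lemma~\ref{tech3}. Your approach is more algebraic: rather than approximating inside the ambient $\Q$-vector space, you exploit the $y$-adic filtration of $\OO_L$ to produce a system of representatives for $\OO_L/y^k\OO_L$ of controlled norm, solve the congruence $ax^s\equiv d$ there, and read off $c$. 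The paper's method is symmetric in $x$ and $y$ and is a transparent lift of the $\Z$-argument (lattice approximation replacing Euclidean division); yours yields the slightly sharper fact that $a$ can be taken with $\|a\|\le K_2^{\,k}$ independently of $d$ and $s$, and avoids introducing any auxiliary element outside $\OO_L$.
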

\begin{proof} %Obviously we may assume $k+s\geq 1$.
Observe first that Lemma \ref{tech1} implies $x^s\OO_L+y^k\OO_L=\OO_L$. Take $r_1,c_1\in\OO_L$ with $1=r_1x^s+c_1y^k.$ Then 
$${c_1\over x^s}={1\over x^sy^k}-{r_1\over y^k}.$$ 
Let 
$$z={1\over 2x^sy^k}-{r_1\over y^k}\in L$$
and take $w\in \OO_L$ so that $\epsilon:= z-w$ has  smallest possible norm, observe that $\|\epsilon\|\leq {n\over 2}.$ Then
\begin{equation}\label{w+}\begin{aligned}\Big\|w+{r_1\over y^k}\Big\|=\Big\|z+{r_1\over y^k}-\epsilon\Big\|\leq\Big\|z+{r_1\over y^k}\Big\|+\|\epsilon\|\leq{1\over 2}\Big\|{1\over x^sy^k}\Big\|+{n\over 2},\end{aligned}\end{equation}
\begin{equation}\label{w-}\begin{aligned}\Big\|{c_1\over x^s}-w\Big\|=\Big\|{c_1\over x^s}-z+\epsilon\Big\|\leq\Big\|{c_1\over x^s}-z\Big\|+\|\epsilon\|\leq
{1\over 2}\Big\|{1\over x^sy^k}\Big\|+{n\over 2}
\end{aligned}\end{equation}
But by Lemma \ref{tech3} (recall that $C_1\geq 1$)
\begin{equation}\label{secondpart}
\Big\|{1\over x^sy^k}\Big\|\leq  (C_1\Big\|{1\over x}\Big\|)^{s}(C_1\Big\|{1\over y}\Big\|)^{k}\leq K_1^{s+k}
\end{equation}
with $K_1=\text{max}\{C_1\|{1\over x}\|,C_1\|{1\over y}\|,1\}$
Now, let
$$a:=d(r_1+y^kw),$$
$$b:=d(c_1-x^sw).$$
We have
$$d=dx^sr_1+dy^kc_1=dx^sr_1+dy^kc_1+dx^sy^kw-dx^sy^kw=x^sa+y^kb$$
and using again Lemma \ref{tech3},$$
\|a\|=\Big\|dy^k(w+{r_1\over y^k})\Big\|\leq\|d\|\|y\|^k\|w+{r_1\over y^k}\|C_1^{k+1}$$
So taking into account  (\ref{w+}) and (\ref{secondpart}) we deduce
$$\|a\|\leq \|d\|\|y\|^k(K_1^{s+k}+n)C_1^{k+1}.$$

If we use (\ref{w-}) instead of (\ref{w+}) we get an analogous bound for $\|b\|$ and letting $K$ be the largest of the relevant constants we get the result.
\end{proof}

\begin{lemma}\label{tech6} Fix $x,y\in\OO_L$ with $x\OO_L+y\OO_L=\OO_L$. For any $b\in\bigcup_{s,k\geq 0}{1\over x^sy^k}\OO_L$ let  $s,k\geq 0$ be the smallest values so that $d:=x^sy^kb\in\OO_L$.   Then
$$\text{max}\{s,k\}+\ln(\|d\|+1)\preceq \ln(\|b\|_E+1)$$
where the corresponding constants  depend on $x,y$ only.
\end{lemma}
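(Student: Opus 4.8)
The plan is to follow the route of the Pr\"ufer rank $1$ case (Proposition~\ref{rank1B}), replacing ordinary greatest common divisors by divisibility of ideals in the Dedekind ring $\OO_L$, i.e.\ by $\mathfrak p$-adic valuations. We may assume $b\neq 0$, so that $\mu_E(b)$ is a positive integer. The key auxiliary object I would introduce is the smallest positive \emph{integer} $m_0$ with $m_0b\in\OO_L$: writing the coordinates of $\pi(b)$ in lowest terms as $\beta_i=p_i/q_i$ one has $m_0=\operatorname{lcm}_i(q_i)$, and since $q_i\leq|p_iq_i|=\mu(\beta_i)$ whenever $\beta_i\neq 0$ this gives $m_0\leq(\mu_E(b)+1)^n$, hence $\ln(m_0+1)\preccurlyeq\ln(\mu_E(b)+1)$. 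I would also record the trivial estimate $\|b\|=\sum_i|\beta_i|\leq\sum_i|p_iq_i|=\mu_E(b)$ (using $q_i\geq 1$), so that $\ln(\|b\|+1)\leq\ln(\mu_E(b)+1)$.

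The proof then breaks into two inequalities. \textbf{(1)} $\text{max}\{s,k\}\preccurlyeq\ln(\mu_E(b)+1)$. It is enough to bound $s$, and we may assume $s\geq 1$. Minimality of $s$ means $x^{s-1}y^kb\notin\OO_L$; since $\OO_L=x\OO_L+y\OO_L$ forces $v_{\mathfrak p}(y)=0$ for every prime ideal $\mathfrak p\mid x\OO_L$, this produces a prime $\mathfrak p_0\mid x\OO_L$ with $(s-1)v_{\mathfrak p_0}(x)+v_{\mathfrak p_0}(b)<0$, and as $v_{\mathfrak p_0}(x)\geq 1$ we get $v_{\mathfrak p_0}(b)\leq -s$. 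Letting $p_0$ be the rational prime below $\mathfrak p_0$ and $e=e(\mathfrak p_0\mid p_0)\leq n$ its ramification index, the containment $m_0b\in\OO_L$ gives $e\,v_{p_0}(m_0)=v_{\mathfrak p_0}(m_0)\geq -v_{\mathfrak p_0}(b)\geq s$, whence $m_0\geq p_0^{s/n}\geq 2^{s/n}$ and therefore $s\leq n\log_2 m_0\preccurlyeq\ln(\mu_E(b)+1)$ by the first paragraph; the same argument with $y$ in place of $x$ bounds $k$. \textbf{(2)} $\ln(\|d\|+1)\preccurlyeq\text{max}\{s,k\}+\ln(\mu_E(b)+1)$. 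Here $d=x^sy^kb$ is a product of $s+k+1$ elements of $L$, so iterating Lemma~\ref{tech3} $s+k$ times gives $\|d\|\leq C_1^{\,s+k}\|x\|^{s}\|y\|^{k}\|b\|$; taking $\ln$ and using $\ln(\|b\|+1)\leq\ln(\mu_E(b)+1)$ from the first paragraph yields~(2).

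Combining (1) and (2) (together with Lemma~\ref{propln}(i)) gives $\text{max}\{s,k\}+\ln(\|d\|+1)\preccurlyeq\ln(\mu_E(b)+1)$, with all constants depending only on $x$ and $y$ (through $C_1$, $\|x\|$, $\|y\|$, $v_{\mathfrak p_0}(x)$, and the fixed degree $n$). I expect the only genuine difficulty to lie in step~(1): one has to convert the purely qualitative minimality of $s$ into an honest size bound, and the mechanism is that minimality forces some prime of $\OO_L$ lying over $x$ to appear with exponent at least $s$ in the denominator of $b$, so the rational prime beneath it appears with exponent at least $s/n$ in the integer denominator $m_0$, which is in turn controlled by $\mu_E(b)$ via the elementary least-common-multiple bound of the first paragraph. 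Step~(2) is routine once Lemma~\ref{tech3} is in hand.
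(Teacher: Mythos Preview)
Your argument is correct, and it is genuinely different from---and considerably shorter than---the paper's own proof.

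The paper proceeds by passing to the Galois closure $F$ of $L$: it sets $u=N(x)/x$, $v=N(y)/y$ and $c=du^sv^k$, so that $b=c/(N(x)^sN(y)^k)$ now has a \emph{rational integer} denominator. After checking that $\ln\|d\|$ and $\ln\|c\|$ differ only by $O(s+k)$, the heart of the paper's proof is a delicate computation of $P$-adic valuations in $\OO_F$ (tracking all Galois conjugates of a prime above some $p\mid N(x)N(y)$ and using $y\nmid d$) to exhibit a coordinate $\gamma_1$ of $c$ and a rational prime $p$ with $p^{\epsilon k}\mid N(x)^sN(y)^k/\gcd(\gamma_1,N(x)^sN(y)^k)$; this feeds into the auxiliary Lemma~\ref{bound}, which is essentially the rank-$1$ estimate, to conclude.

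Your route bypasses all of this. The introduction of the integer denominator $m_0=\operatorname{lcm}_i(q_i)$ and the elementary inequality $m_0\leq(\mu_E(b))^n$ replace the whole Galois/norm construction and Lemma~\ref{bound} in one stroke; the single valuation step at a prime $\mathfrak p_0\mid x\OO_L$ (yielding $m_0\geq 2^{s/n}$ via $e(\mathfrak p_0\mid p_0)\leq n$) replaces the paper's two-page computation in $\OO_F$. Step~(2), as you say, is immediate from Lemma~\ref{tech3} together with $\|b\|\leq\mu_E(b)$. The only thing the paper's approach arguably adds is an explicit identification of \emph{which} rational prime witnesses the bound (one dividing $N(y)$ when $k\geq s$), but that information is not needed for the lemma as stated. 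Your constants depend only on $n=[L:\Q]$, $C_1$, $\|x\|$, $\|y\|$, exactly as required.
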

\begin{proof} First,  we may assume that $b$ and $d$ are non-zero. We also assume  that $k\geq s$, for the case when $k\leq s$ one only has to swap $s$ and $k$. Let $F$ be the normal closure of $L$ in $\C$ so that $F:\Q$ is a Galois extension and let  $H=\text{Gal}(F:\Q)$ be its Galois group. For any $a\in F$, we denote by $N(a)=\prod_{\sigma\in H}a^\sigma$ the norm of $a$ in $F$ (in the particular case when $a\in L$, $N(a)$ is a power of the norm of $a$ in $L$).   
We put 
$$\begin{aligned}u:&=\prod_{1\neq\sigma\in H}x^\sigma={N(x)\over x},\\
 v:&=\prod_{1\neq\sigma\in H}y^\sigma={N(y)\over y}\end{aligned}$$ and observe that $u,v\in L$. 
 Let also $c:=du^sv^k$. Note that all the elements $x^\sigma$, $y^\sigma$ are  integers over $\Z$ thus $u,v,c\in\OO_L$ and that
 $$b={d\over x^sy^k}={du^sv^k\over N(x)^sN(y)^k}={c\over N(x)^sN(y)^k}$$
We want to relate $\|d\|$ and $\|c\|$. To do that note that from
$d=(u^{-1})^s(v^{-1})^kc$ we get using Lemma \ref{tech3}
$$\|d\|\leq C_1^{s+k}\|u^{-1}\|^s\|v^{-1}\|^k\|c\|\leq (C_1D)^{s+k}\|c\|$$
for $D=\text{max}\{1,\|u^{-1}\|,\|v^{-1}\|\}$ (the value of $\|u^{-1}\|$ and $\|v^{-1}\|$ is well defined because $u^{-1}$ and $v^{-1}$ lie in $L$). This implies
$$\ln\|d\|\leq (s+k)\ln(C_1D)+\ln\|c\|.$$
The value of $\ln(C_1D)$ could be negative but adding $(s+k)|\ln(C_1D)|$ at both sides we get
$$k+\ln(\|d\|+1)\sim s+k+\ln\|d\|\preceq s+k+\ln\|c\|\sim k+\ln(\|c\|+1).$$
This means that it suffices to prove the assertion in the Lemma for the element $c$ instead of $d$.
Now, recall that we have fixed an isomorphism $\pi:L\to\Q^n$ and denote $\pi(c)=(\gamma_1,\ldots,\gamma_n)\in\Z^n.$  Put also $\beta_i=\gamma_i/N(x)^sN(y)^k$ for $i=1,\ldots,n.$

We claim that  there is some $\epsilon$ depending on $x$ and $y$ only and  a prime integer $p$ such that for some of $\gamma_1,\ldots,\gamma_n$ which we may assume is $\gamma_1$ we have $\gamma_1\neq 0$ and $$p^{\epsilon k}\text{ divides } N(x)^sN(y)^k/\gcd(\gamma_1,N(x)^sN(y)^k).$$
 Assume that the claim is true. Lemma \ref{bound} below implies:
$$\text{If }\gamma_i\neq 0,\, \ln(|\gamma_i|)\leq kC_3+\ln(\mu(\beta_i)),$$
$$k+\ln(|\gamma_1|)\leq C_4\ln(\mu(\beta_1)).$$
Therefore if we set $C_5=(n-1)C_3+1$,
$$\begin{aligned}
k+\ln(\|c\|+1)\sim k+\ln|\gamma_1|+ \sum_{i=2,\gamma_i\neq 0}^n\ln(|\gamma_i|)\leq\\
 kC_5+\ln|\gamma_1|+\sum_{i=2,\beta_i\neq 0}^n\ln(\mu(\beta_i))\leq\\
  (k+\ln|\gamma_1|)C_5+\sum_{i=2,\beta_i\neq 0}^n\ln(\mu(\beta_i))\leq\\
  \ln(\mu(\beta_1)C_4C_5+\sum_{i=2,\beta_i\neq 0}^n\ln(\mu(\beta_i))\preceq
  \ln(\mu_E(b)+1)
  \end{aligned}$$
and we would get the result.  

So we have to prove that the claim above holds true. 
To do that, we let  $P$ be an arbitrary prime ideal of $\OO_L$ and take $p$ the prime integer such that $P$ lies over $p$. We want to compute which is the highest power of $p$ dividing $c$ and $N(x)^sN(y)^k$.  We use  $\nu_P$ to denote the highest power of $P$ dividing a given ideal in $\OO_L$. The ramification index of $p$ in $\OO_L$ is $e_p=\nu_P(p\OO_L)$.  The Galois group $H$ of $F:\Q$ acts on the set of prime factors of $p\OO_L$ and if we
 put $T=\text{Stab}_H(P)$ and take $\{\sigma_1,\dots,\sigma_t\}$ coset representatives of $T$ in $H$, then
 $$p\OO_L=(\prod_{i=1}^tP^{\sigma_i})^{e_p}.$$
We set
$$q_i:=\nu_{P^{\sigma_i}}(x\OO_F),r_i:=\nu_{P^{\sigma_i}}(y\OO_F),$$
$$v_i:=\nu_{P^{\sigma_i}}(d\OO_F),$$
for $i=1,\ldots,t$ and
$$\Lambda_1=\sum_{j=1}^tq_j|T|,\ \Lambda_2=\sum_{j=1}^tr_j|T|.$$
We observe that $x\OO_F$ and $y\OO_F$ are coprime ideals, so if $q_i\neq 0$ for some $i$, then $r_i=0$ and conversely. 
 For $l\geq 0$ we have $p^l\mid c=du^sv^k$ if and only if $(P^{\sigma_i})^{e_pl}\mid c\OO_F$ for $i=1,\ldots,t$. This is equivalent to 
\begin{equation}\label{div}e_pl\leq \nu_{P^{\sigma_i}}(c\OO_F)=\nu_{P^{\sigma_i}}(d\OO_F)+s\nu_{P^{\sigma_i}}(u\OO_F)+k\nu_{P^{\sigma_i}}(v\OO_F)\end{equation}  
 for $i=1,\ldots,t.$ As $u$ and $v$ are a product of $H$-conjugates of $x$ and $y$ resp., we deduce
$$\begin{aligned}\nu_{P^{\sigma_i}}(v\OO_F)=\sum_{1\neq\sigma\in H}\nu_{P^{\sigma_i}}(y^\sigma\OO_F)=\sum_{1\neq\sigma\in H}\nu_{P^{\sigma_i\sigma^{-1}}}(y\OO_F)=\\
\sum_{\sigma_i\neq\sigma\in H}\nu_{P^{\sigma}}(y\OO_F)=
\sum_{j=1}^tr_j|T|-r_i=\Lambda_2-r_i.
\end{aligned}$$ 
and analogously,
$$\nu_{P^{\sigma_i}}(u\OO_F)=\Lambda_1-q_i.$$
Thus (\ref{div}) is equivalent to 
\begin{equation}\label{div2}
e_pl\leq v_i+s\Lambda_1-sq_i+k\Lambda_2-kr_i\ \text{ for }i=1,\ldots,t.
\end{equation}
This means that the highest power of $p$ that divides $c$ in $\OO_F$ is $p^l$ with
$$l=\Big\lfloor{v_j+s\Lambda_1-sq_j+k\Lambda_2-kr_j\over e_p}\Big\rfloor$$
where $j$ is the index so that the value of
$${v_j+s\Lambda_1-sq_j+k\Lambda_2-kr_j\over e_p}$$
is smallest possible (we fix this $j$ from now on). On the other hand, $p^m$  divides $N(x)^sN(y)^k$ if and only if
$$e_pm\leq s\sum_{i=1}^tq_i|T|+ k\sum_{i=1}^tr_i|T|=s\Lambda_1+k\Lambda_2.$$
thus the highest power of $p$ dividing $N(x)^sN(y)^k$ is $p^m$ with
$$m=\Big\lfloor{s\Lambda_1+k\Lambda_2\over e_p}\Big\rfloor.$$
(note that $N(x)$ and $N(y)$ might not be coprime). At this point we are going to fix $\epsilon$. Set
$$\epsilon=\text{min}\{{1\over 2e_p}; p\mid N(x)N(y)\}.$$

The hypothesis that $y\nmid d$ implies that we may choose a  prime ideal $P$ so that, reordering the indices if necessary, $0\leq v_1<r_1$. In particular, this implies $r_1\geq 1$ thus $q_1=0$ and also $r_1-v_1\geq 1$.  
Then
$$\begin{aligned}{s\Lambda_1+k\Lambda_2\over e_p}-{v_j+s\Lambda_1-sq_j+k\Lambda_2-kr_j\over e_p}\geq\\
{s\Lambda_1+k\Lambda_2\over e_p}-{v_1+s\Lambda_1-sq_1+k\Lambda_2-kr_1\over e_p}=\\
{kr_1-v_1\over e_p}={(k-1)r_1+(r_1-v_1)\over e_p}\geq {k\over e_p}.\end{aligned}$$
And from this one easily deduces that
$$m-l=\Big\lfloor {k\Lambda\over e_p}\Big\rfloor-\Big\lfloor {s\Lambda_1-sq_j+k\Lambda_2-kr_j+v_j\over e_p}\Big\rfloor\geq{1\over 2e_p}k\geq\epsilon k.$$

Obviously, as $p^l\mid c$, we have $p^l\mid\gamma_i$ for any $i$. And the fact that $p^l$ is the highest power of $p$ dividing $c$ implies that there must be some of $\gamma_1,\ldots,\gamma_n$ which reordering we may assume is $\gamma_1$ which is not a multiple of $p^{l+1}$ (in particular, $\gamma_1\neq 0$).  Then $p^l$ is the highest power of $p$ dividing $\gcd(\gamma_1,N(x)^sN(y)^k)$  thus $p^{m-l}$ divides $N(x)^sN(y)^k/\gcd(\gamma_1,N(x)^sN(y)^k)$. As $m-l\geq\epsilon k$ we get the claim.
\end{proof}

\begin{lemma}\label{bound} Let $N$, $M\in\Z$ and $\epsilon>0$. 
There are constants $C_3,C_4>0$ depending only on $N$, $M$, $\epsilon$ such that for any $0\neq\gamma\in\Z$, $0\leq s,k\in\Z$ and $\beta:={\gamma/M^sN^k}$ we have
$$\ln|\gamma|\leq C_3\text{max}\{s,k\}+\ln(\mu(\beta))$$
and if there is some prime $p$ such that $p^{\epsilon\text{max}\{s,k\}}$ divides $M^sN^k/\text{gcd}(\gamma,M^sN^k)$, then
$$\text{max}\{s,k\}+\ln|\gamma|\leq C_4\ln(\mu(\beta)).$$
\end{lemma}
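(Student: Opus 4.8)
The plan is to unwind the definition of $\mu$ and reduce both bounds to completely elementary manipulations of greatest common divisors. First I would normalise: replacing $M$ and $N$ by $|M|$ and $|N|$ changes neither $\mu(\beta)$ (which is insensitive to signs, since $\mu(\gamma/\alpha)=|\gamma\alpha|/\gcd(\gamma,\alpha)^2$) nor the divisibility hypothesis, so I may assume $M,N\geq 1$; and if $M=N=1$ then $\beta=\gamma\in\Z$ with $\mu(\beta)=|\gamma|$, so both inequalities are trivial with any positive constant. Hence I would assume $\max\{M,N\}\geq 2$. Writing $\alpha=M^sN^k$ and $g=\gcd(|\gamma|,\alpha)$, the one computation I actually need is that, directly from the definition,
$$\mu(\beta)=\frac{|\gamma\alpha|}{\gcd(\gamma,\alpha)^2}=\frac{|\gamma|}{g}\cdot\frac{\alpha}{g}$$
is a product of two \emph{positive integers}; in particular $\mu(\beta)\geq|\gamma|/g$ and $\mu(\beta)\geq\alpha/g$ separately.

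For the first inequality I would note that $g\mid\alpha$ gives $g\leq\alpha=M^sN^k$, whence
$$|\gamma|=g\cdot\frac{|\gamma|}{g}\leq\alpha\,\mu(\beta)\leq(\max\{M,N\})^{s+k}\mu(\beta)\leq(\max\{M,N\})^{2\max\{s,k\}}\mu(\beta).$$
Taking logarithms yields $\ln|\gamma|\leq C_3\max\{s,k\}+\ln\mu(\beta)$ with $C_3=2\ln\max\{M,N\}>0$, which depends only on $M$ and $N$.

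For the second inequality I would exploit the extra hypothesis: if $p^{\epsilon\max\{s,k\}}$ divides $\alpha/g=M^sN^k/\gcd(\gamma,M^sN^k)$, then
$$\mu(\beta)\geq\frac{\alpha}{g}\geq p^{\epsilon\max\{s,k\}}\geq 2^{\epsilon\max\{s,k\}},$$
so $\max\{s,k\}\leq\frac{1}{\epsilon\ln 2}\ln\mu(\beta)$. Substituting this into the first inequality also bounds $\ln|\gamma|$ by a constant multiple of $\ln\mu(\beta)$, and adding the two estimates gives $\max\{s,k\}+\ln|\gamma|\leq C_4\ln\mu(\beta)$ with $C_4=\frac{1+C_3}{\epsilon\ln 2}+1$, depending only on $M,N,\epsilon$.

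I do not expect any genuine obstacle here: the statement is essentially bookkeeping around the definition of $\mu$. The only two points that need a little care are (a) verifying that $\mu(\beta)$ really splits as a product of two integers $\geq 1$, since it is precisely this that lets me use each factor as an independent lower bound and thereby get both inequalities at once; and (b) the degenerate cases — $M$ or $N$ a unit, or $\mu(\beta)=1$ (which, together with the divisibility hypothesis and $\epsilon>0$, forces $\max\{s,k\}=0$ and $|\gamma|=1$) — where one simply observes that every term vanishes, so the claimed inequalities hold for any positive $C_3,C_4$.
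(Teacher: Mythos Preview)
Your proof is correct and follows essentially the same route as the paper: both arguments rest on the factorisation $\mu(\beta)=\dfrac{|\gamma|}{g}\cdot\dfrac{\alpha}{g}$ with $\alpha=M^sN^k$ and $g=\gcd(\gamma,\alpha)$, use $g\leq\alpha\leq|MN|^{\max\{s,k\}}$ for the first bound, and use the prime-power hypothesis to bound $\max\{s,k\}$ in terms of $\ln(\alpha/g)\leq\ln\mu(\beta)$ for the second. The only cosmetic difference is that in the second part the paper bounds via $\ln p$ for primes $p\mid MN$, whereas you use $p\geq 2$ uniformly; your version is slightly tidier but not a different idea.
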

\begin{proof} Assume $s\leq k$ the other case being analogous. Let $l=|\text{gcd}(\gamma,M^sN^k)|$. Obviously $l\leq |M^sN^k|\leq |MN|^k$ thus
$$\ln|l|\leq k\ln|MN|.$$
We have $\gamma=l(\gamma/l)$ and $\mu(\beta)=|\gamma/l||M^sN^k/l|$. Therefore
$$\ln|\gamma|=\ln|l|+\ln\Big|{\gamma\over l}\Big|\leq k\ln|MN|+\ln\Big|{\gamma\over l}\Big|+\ln\Big|{M^sN^k\over l}\Big|\leq k\ln|MN|+\ln|\mu(\beta)|$$
so we get the first assertion with $C_3=\ln|MN|$.
Now, assume that there is some prime with 
$$p^{\epsilon k}\mid{M^sN^k\over l}.$$ Then
$\epsilon k\ln|p|\leq\ln|{M^sN^k\over l}|$ and therefore
$$\begin{aligned}
k+\ln|\gamma|=k+\ln|l|+\ln\Big|{\gamma\over l}\Big|\leq k(1+\ln|MN|)+\ln\Big|{\gamma\over l}\Big|\leq\\
 \Big({1+\ln|MN|\over\epsilon\ln|p|}\Big)\ln\Big|{M^sN^k\over l}\Big|+\ln\Big|{\gamma\over l}\Big|\end{aligned}$$
So if we let
$$C_4=\max\{\max\Big\{{1+\ln|MN|\over\epsilon\ln|p|}; p\text{ divides }MN\Big\},1\}$$
we get
$$k+\ln|\gamma|\leq C_4\Big(\ln\Big|{M^sN^k\over l}\Big|+\ln\Big|{\gamma\over l}\Big|\Big)=C_4\ln(\mu(\beta)).$$
\end{proof}

\section{Proofs of the main results}

\noindent{\sl Proof of Theorem \ref{main1}.} If we have $G=Q\ltimes B$ in ii) use Theorem \ref{easyineq} and if we have $B=G'$ use Corollary \ref{easyineq1} to deduce
$$\ln(\mu_E(b)+1)\preceq\|b\|_G.$$
For the converse, note that the group $\hat G$ is as in the hypothesis of Proposition \ref{reduction} so there is some $t\in\hat G$ such that $H=\langle t,B\rangle$ is finitely generated and $t$ acts on $B$ with no eigenvalue of complex norm 1. Moreover the action of $t$ on $B$ is semisimple. Moreover, by Subsection \ref{reductions} be may assume that $H$ is as in the hypothesis of Theorem \ref{lowerfin}. Therefore
$$\|b\|_G\preceq\|b\|_{\hat G}\preceq\|b\|_H\preceq\ln(\mu_E(b)+1).$$
\hfill$\square$

\bigskip

Before we proceed to the proof of Theorem \ref{main1,5} we state and show the result by Kronecker that was mentioned in the introduction.

\begin{lemma}(Kronecker, \cite{Kronecker})\label{Kronecker} Let $q(x)\in\Z[x]$ be an irreducible monic polynomial of degree $m$. If $q(x)$ has some root $\lambda$ with complex norm 1, then $m$ is even and $q(x)=q(1/x)x^m$. 
%In particular, the independent term of $q(x)$ is also 1.
\end{lemma}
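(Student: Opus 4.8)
The plan is to exploit the hypothesis that $q(x)$ is monic with integer coefficients, so that its roots $\lambda_1,\dots,\lambda_m$ are algebraic integers, and that $q$ being irreducible means these roots are exactly the Galois conjugates of $\lambda$. First I would observe that, since $q$ has some root of complex absolute value $1$, complex conjugation permutes the roots of $q$ (because $\overline{q(\overline z)}=q(z)$ as $q$ has real coefficients), so if $\lambda$ is a root with $|\lambda|=1$ then $\bar\lambda=1/\lambda$ is also a root. The key idea is then to show that \emph{every} root has absolute value $1$. For this I would consider, for each $j\geq 1$, the polynomial $q_j(x)$ whose roots are $\lambda_1^j,\dots,\lambda_m^j$; since the $\lambda_i$ are a full set of Galois conjugates (each being an algebraic integer), $q_j(x)$ again lies in $\Z[x]$, is monic, and has degree $m$.

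The main step is a pigeonhole/boundedness argument on the coefficients of the $q_j$. The coefficients of $q_j$ are, up to sign, the elementary symmetric functions of $\lambda_1^j,\dots,\lambda_m^j$; if all the $\lambda_i$ satisfy $|\lambda_i|\leq 1$ — which one first reduces to by noting that the product of all roots is $\pm q(0)\in\Z$ and has absolute value a product of terms one of which is $1$, forcing $|q(0)|\leq \prod|\lambda_i|$, and combined with the conjugate-pairing this pins the situation down — then each elementary symmetric function of the $\lambda_i^j$ is bounded in absolute value by $\binom{m}{k}$, independently of $j$. Hence the $q_j$, for $j=1,2,3,\dots$, all lie in a finite set of polynomials in $\Z[x]$. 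By pigeonhole there are $j<j'$ with $q_j=q_{j'}$, so the multiset $\{\lambda_i^j\}=\{\lambda_i^{j'}\}$, which forces each $\lambda_i$ to be a root of unity (iterating the permutation of the finite root set $\{\lambda_i^{j}\}$ induced by $x\mapsto x^{j'/j}$ shows each $\lambda_i^{j}$, hence each $\lambda_i$, has finite orbit under repeated powering, so $\lambda_i^{a}=\lambda_i^{b}$ for some $a<b$, i.e.\ $\lambda_i$ is a root of unity). In particular every $|\lambda_i|=1$.

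Once all roots lie on the unit circle, I would finish as follows. Since $|\lambda_i|=1$ we have $\bar\lambda_i=\lambda_i^{-1}$ for every $i$; complex conjugation being a permutation of the root multiset, the map $\lambda\mapsto\lambda^{-1}$ permutes the roots of $q$. Therefore the polynomial $x^m q(1/x)$, whose roots are exactly the $\lambda_i^{-1}$, has the same roots as $q$ with the same multiplicities, and both are monic (the constant term of $q$ is $\pm\prod\lambda_i$, of absolute value $1$, and being in $\Z$ equals $\pm1$, which makes $x^mq(1/x)$ monic), so $q(x)=x^m q(1/x)$, i.e.\ $q$ is self-reciprocal. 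Finally, to see $m$ is even: the roots come in conjugate pairs $\{\lambda,\bar\lambda\}$, and a real root on the unit circle is $\pm1$; but $q(\pm1)=0$ is impossible since then $(x\mp1)\mid q(x)$ would contradict irreducibility unless $q(x)=x\mp1$ (degree $1$, which is self-reciprocal only for $x-1$... in fact $m=1$ would force $\lambda=\pm1$, and $\lambda=1$ does occur in the trivial case, but then $m=1$ is odd — one must simply note the lemma's conclusion is about the case $m\geq 2$, or observe $q(x)=x-1$ does not have a root of norm $1$ other than handled trivially). So for $m\geq 1$ with a genuine pair of non-real unit-circle conjugates, $m$ is even.

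The step I expect to be the main obstacle is making the pigeonhole argument fully rigorous: one must justify that the $q_j$ genuinely lie in $\Z[x]$ (this uses that the full set of conjugates appears, so symmetric functions of their powers are integers) and that the resulting finiteness forces each root to be a root of unity rather than merely forcing repetition of the polynomial. The cleanest route is the classical one: bound the coefficients uniformly, extract $q_j=q_{j'}$, conclude $\lambda\mapsto\lambda^{j'/j}$ permutes a finite set so every root is a root of unity, which is exactly Kronecker's original argument; care is only needed in the bookkeeping with conjugates and in the degenerate low-degree cases.
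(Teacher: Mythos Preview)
Your approach has a fundamental gap: you attempt to show that \emph{every} root of $q$ lies on the unit circle (indeed, is a root of unity), but this is false in general. The lemma only assumes that \emph{one} root $\lambda$ satisfies $|\lambda|=1$; it does not follow that all conjugates do. A concrete counterexample is Lehmer's polynomial
\[
x^{10}+x^9-x^7-x^6-x^5-x^4-x^3+x+1,
\]
which is irreducible over $\Q$, has eight roots on the unit circle, but also two real roots off it (one larger than $1$ and one its reciprocal). For such a $q$ the polynomials $q_j$ in your pigeonhole argument have unbounded coefficients as $j\to\infty$, so no finiteness conclusion is available, and the roots are certainly not roots of unity. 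Your attempted reduction to $|\lambda_i|\le 1$ via the constant term and ``conjugate pairing'' does not work: knowing $|q(0)|=\prod_i|\lambda_i|$ with one factor equal to $1$ says nothing about the individual $|\lambda_i|$. You are conflating this lemma with the classical Kronecker theorem, whose hypothesis is that \emph{all} conjugates have absolute value at most $1$.

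The paper's argument avoids this entirely and never claims the other roots lie on the unit circle. It works field-theoretically with the single root $\lambda$: writing $\lambda=a+bi$ with $a^2+b^2=1$ gives $a=(\lambda^2+1)/(2\lambda)\in\Q(\lambda)$, so $\Q(a)$ is a proper real subfield of $\Q(\lambda)$ of index $2$, hence $m=2k$ with $k=[\Q(a):\Q]$. If $h$ is the minimal polynomial of $a$, then $(2x)^k h\big((x^2+1)/(2x)\big)$ is a monic degree-$2k$ polynomial vanishing at $\lambda$, hence equals $q(x)$, and this expression is visibly invariant under $x\mapsto 1/x$ after multiplying by $x^m$. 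A more direct route, closer to what you were reaching for at the end, is also available: since $|\lambda|=1$ we have $1/\lambda=\bar\lambda$, which is a root of $q$; thus $\lambda$ is a root of the reciprocal polynomial $q^*(x)=x^m q(1/x)$, and irreducibility of $q$ forces $q^*=q(0)\cdot q$; comparing constant terms gives $q(0)^2=1$, and $q(0)=-1$ would force $q(1)=0$, so $q^*=q$; evenness of $m$ then follows by evaluating at $x=-1$.
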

\begin{proof} As $q(x)$ is irreducible, $\lambda\neq\pm 1$ thus $\lambda$ is not a real number. Let $\lambda=a+bi$, $a,b\in\R$. Then $a^2+b^2=1$ which implies
$\lambda^2-2a\lambda+1=0$
thus $$a={\lambda^2+1\over 2\lambda}.$$
In particular, $a\in\Q(\lambda)$ so $a$ is algebraic and $\Q(a)\subseteq\Q(\lambda)$. As $\Q(a)\subseteq\R$ we have $\Q(a)\subsetneq\Q(\lambda)$ and we see that the degree of the extension $\Q(\lambda):\Q(a)$ is precisely 2. Let $h(x)\in\Q[x]$ be an irreducible monic polynomial having $a$ as a root and let $k$ be its degree. The discussion above implies that  $m=2k$. In fact,  $\lambda$ is a root of
$$g(x)=(2x)^kh\Big({x^2+1\over 2x}\Big)$$
which is a polynomial of degree $2k$ thus $g(x)=q(x)$.
Finally,
$$x^{2k}g(1/x)=x^{2k}\Big({2^k\over x^k}\Big)h\Big({{1\over x^2}+1\over{2\over x}}\Big)=g(x).$$
 \end{proof}

 \noindent{\it Proof of Theorem \ref{main1,5}.} As $G$ is abelian-by-cyclic, the hypothesis that $G$ is finitely presented implies that it must be constructible (see for example Corollary 11.4.6 in Lennox and Robinson \cite{LennoxRobinson}). So we deduce that $G$  has  finite Pr\"ufer rank and also, up to swapping $g$ and $g^{-1}$,  that $g$ acts on $B$ via an integer matrix $M$. Let $p(x)\in\Z[x]$ be the minimal polynomial of the matrix $M$ ($p(x)$ is also a generator of the annihilator of $B$ in $\Q\langle g\rangle$). Assume  that  $p(x)$ has some root $\lambda$ of complex norm 1 and choose some $q(x)\in\Z[x]$ irreducible factor  of $p(x)$ having $\lambda$ as root. Observe that there must be some $1\neq b\in B$ with $b^{q(g)}=0$ and from Lemma \ref{Kronecker}, we  deduce that if $m$ is the degree of $q(x)$, $q(x)=q(1/x)x^m$ thus
$b^{q(g^{-1})g^m}=1$ and therefore $b^{q(g^{-1})}=1.$
This implies that the free abelian subgroup of $B$ generated by the finite set $b^g,\ldots,b^{g^{m-1}}$ is setwise invariant under the action of both $g$ and $g^{-1}$ which contradicts the hypothesis. 
\hfill$\square$
\bigskip

Finally, in the proof of Theorem \ref{main2} we will use the next result

\begin{proposition}\label{semidirect} Let $G=Q\ltimes N$ with $Q$ finitely generated and $N$ finitely generated as $Q$-group. Then $G$ is finitely generated and
$$\|g\|_G\thicksim\|q\|_Q+\|n\|_G$$
where $q\in Q$ and $n\in N$ are  elements with $g=qn$ for nontrivial $q$.
\end{proposition}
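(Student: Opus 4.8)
The plan is to pick mutually compatible generating sets and then chain together three elementary inequalities. First I would fix a finite generating set $\Omega$ of $Q$ and a finite subset $\Lambda\subseteq N$ generating $N$ as a $Q$-group. Since every element of $N$ is a product of $Q$-conjugates of elements of $\Lambda^{\pm1}$, and every conjugating element of $Q$ is a word in $\Omega^{\pm1}$, the set $\Omega\cup\Lambda$ generates $G$; this already proves that $G$ is finitely generated. From here on I compute $\|\cdot\|_G$ using $\Omega\cup\Lambda$ and $\|\cdot\|_Q$ using $\Omega$. With these choices the inclusion $Q\hookrightarrow G$ does not increase length, so $\|q\|_G\leq\|q\|_Q$ for all $q\in Q$; and the retraction $\pi\colon G\to Q$ with kernel $N$ sends $\Omega$ to $\Omega$ and $\Lambda$ to $1$, so it does not increase length either.

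For the upper bound, writing $g=qn$ and concatenating a $Q$-geodesic word for $q$ with a $G$-geodesic word for $n$ gives $\|g\|_G\leq\|q\|_G+\|n\|_G\leq\|q\|_Q+\|n\|_G$. For the lower bound I need $\|q\|_Q\preccurlyeq\|g\|_G$ and $\|n\|_G\preccurlyeq\|g\|_G$ separately. The first is immediate from $\pi$: since $\pi(g)=q$ and $\pi$ is non-expanding, $\|q\|_Q\leq\|g\|_G$. The second then follows because $n=q^{-1}g$, so $\|n\|_G\leq\|q\|_G+\|g\|_G\leq\|q\|_Q+\|g\|_G\leq 2\|g\|_G$ using the inequality just established. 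Adding the two yields $\|q\|_Q+\|n\|_G\leq 3\|g\|_G$, and together with the upper bound this gives $\|g\|_G\thicksim\|q\|_Q+\|n\|_G$.

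This argument is essentially bookkeeping and I do not expect a genuine obstacle. The only point that needs a little care is arranging the generating sets so that the inclusion $Q\hookrightarrow G$ and the retraction $G\twoheadrightarrow Q$ are simultaneously non-expanding; one should also keep in mind that $N$ is only assumed finitely generated as a $Q$-group, not as an abstract group, which is why the statement is phrased in terms of $\|n\|_G$ rather than an intrinsic word metric on $N$. Note finally that the hypothesis that $q$ is nontrivial is not really used: when $q=1$ we have $g=n$ and the asserted equivalence is trivial.
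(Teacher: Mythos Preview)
Your proof is correct and follows essentially the same approach as the paper's: both use the retraction $G\twoheadrightarrow Q$ (the paper phrases this as ``deleting the $T$-letters'' from a geodesic word) to get $\|q\|_Q\leq\|g\|_G$, then bound $\|n\|_G$ via $n=q^{-1}g$, arriving at the same constant $3$. Your formulation via the non-expanding retraction is slightly more conceptual, but the content is identical.
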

\begin{proof} Let $S$ be a finite generating system of $Q$ and $T$ a finite generating system of $N$ as $Q$-group. Obviously, $S\cup T$ is a finite generating system of $G$. Let $w$ be a geodesic word in $(S\cup T)^*$ representing $g=qn$.
Let $\rho(w)$ be the number of instances of elements of $S\cup S^{-1}$ in $w$ and let $\nu(w)$ be the number of instances of elements of $T\cup T^{-1}$, so that $\|g\|_G=\rho(w)+\nu(w).$ Let $w_Q$ be the word obtained from $w$ by deleting all the instances of elements of $T\cup T^{-1}$, then  $w_Q=_Gq$ thus $\|q\|_G\leq\rho(w)$. Moreover, if we denote by $w_Q^{-1}$ the inverse word of $w_Q$, then the word $w_Q^{-1}w$ represents an element in  $N$ and since $g=_Gw_Qw_Q^{-1}w$, we see that $n=_Gw_Q^{-1}w$ thus $\|n\|_G\leq 2\rho(w)+\nu(w)$. So we have:
$$3\|g\|_G=3\rho(w)+3\nu(w)\geq\|q\|_G+\|n\|_G.$$
On the other hand, from $g=qn$ we get $\|g\|_G\leq\|q\|_G+\|n\|_G$. 
%As $Q$ is a retract of $G$, it is quasi-isometrically embedded in $G$ which implies the result.
\end{proof}

\noindent{\sl Proof of Theorem \ref{main2}.} Let $\tilde Q\leq Q$ be a torsion-free finite-index subgroup of $Q$ and choose a basis $q_1\ldots,q_r$ of $\tilde Q$. Choose rational matrices $M_1,\ldots,M_r$ which encode the action of each $q_i$ on $B$ and let $m$ be an integer such that $mM_i$ is integer for $i=1,\ldots,k$. Let
$\theta:G\to B$ be the group homomorphism induced by $\theta(q_i)=q_i$, $\theta(b)=mb$ for $b\in B$. 
 In the proof of  \cite[Theorem 8]{BaumslagBieri}, Baumslag and Bieri show that the HNN-extension
 $$G_1=G\ast_\theta$$ 
is constructible. Then $G_1=Q_1\ltimes B_1$ where $Q$ embeds in $Q_1$ and $B$ embeds in $B_1$, moreover this last group has the same Pr\"ufer rank as $B$ has. Then Theorem \ref{main1} and Proposition \ref{semidirect} imply that for any $g\in G$ such that $g=qb$ for $g=qb$, $q\in Q$, $b\in B$,
$$\|g\|_G\sim\|q\|_Q+\|b\|_G=\|q\|+\ln(\mu_E(b))\sim\|q\|_{Q_1}+\|b\|_{G_1}\sim\|g\|_{G_1}.$$
\hfill$\square$
\goodbreak
\bigskip

\iffalse
Let $A$ be the free abelian group of rank 4 and consider the HNN-extension with stable letter $t$ with
$a_1^t=a_2$, $a_2^t=a_3$, $a_3^t=a_4$, $(a_4^3)^t=a_1^{-3}a_2^{-12}a_3^{-14}a_4^{-12}$. This corresponds to the action of $t$ on $\Q^4$ whose matrix has characteristic polynomial $x^4+4x^3+{14/3}x^2+4x+1.$ This polynomial has 2 real roots and 2 complex ones. The complex ones are
$${-3+\sqrt{3}\over 3}\pm i\sqrt{-1+2\sqrt{3}\over 3}$$
and have complex modulus 1.
 How are word lengths for this group?

\fi

\end{document}